\newtheorem{theorem}{Theorem}[section]
\newtheorem{lemma}[theorem]{Lemma}
\newtheorem{corollary}[theorem]{Corollary}
\theoremstyle{definition}
\newtheorem{definition}[theorem]{Definition}
\theoremstyle{remark}
\newtheorem{remark}[theorem]{Remark}
\numberwithin{equation}{section}
\begin{document}

\title[An adjunction inequality obstruction to isotopy]{An adjunction inequality obstruction to isotopy of embedded surfaces in 4-manifolds}
\author{David Baraglia}

\address{School of Mathematical Sciences, The University of Adelaide, Adelaide SA 5005, Australia}

\email{david.baraglia@adelaide.edu.au}

%\subjclass[2010]{Primary 53C08, 19L50; Secondary 53D18, 53C80}

\date{\today}

%%%%%%%%%%%%%%%%%%%%%%%%%%%%%%%%%%%%%%%%%%%%%%%%%%%%%%%%%%%%%%%%%%%%%%%%%%%%%%%%
%%%%%%%%%%%%%%%%%%%%%%%%%%%%%%%%%%%%%%%%%%%%%%%%%%%%%%%%%%%%%%%%%%%%%%%%%%%%%%%%
\begin{abstract}
Consider a smooth $4$-manifold $X$ and a diffeomorphism $f : X \to X$. We give an obstruction in the form of an adjunction inequality for an embedded surface in $X$ to be isotopic to its image under $f$. It follows that the minimal genus of a surface representing a given homology class and which is isotopic to its image under $f$ is generally larger than the minimal genus without the isotopy condition. We give examples where the inequality is strict. We use our obstruction to construct examples of infinitely many embedded surfaces which are all continuously isotopic but mutually non-isotopic smoothly. 

\end{abstract}
%%%%%%%%%%%%%%%%%%%%%%%%%%%%%%%%%%%%%%%%%%%%%%%%%%%%%%%%%%%%%%%%%%%%%%%%%%%%%%%%
%%%%%%%%%%%%%%%%%%%%%%%%%%%%%%%%%%%%%%%%%%%%%%%%%%%%%%%%%%%%%%%%%%%%%%%%%%%%%%%%

\maketitle

%%%%%%%%%%%%%%%%%%%%%%%%%%%%%%%%%%%%%%%%%%%%%%%%%%%%%%%%%%%%%%%%%%%%%%%%%%%%%%%%
%%%%%%%%%%%%%%%%%%%%%%%%%%%%%%%%%%%%%%%%%%%%%%%%%%%%%%%%%%%%%%%%%%%%%%%%%%%%%%%%
%%%%%%%%%%%%%%%%%%%%%%%%%%%%%%%%%%%%%%%%%%%%%%%%%%%%%%%%%%%%%%%%%%%%%%%%%%%%%%%%
%%%%%%%%%%%%%%%%%%%%%%%%%%%%%%%%%%%%%%%%%%%%%%%%%%%%%%%%%%%%%%%%%%%%%%%%%%%%%%%%

\section{Introduction}

The minimal genus problem for smooth $4$-manifolds asks to find the minimal genus of an embedded oriented surface representing a given $2$-dimensional homology class in $4$-manifold. Seiberg-Witten theory has generated vast progress on the minimal genus problem \cite{km,mst,os}, see also the survey paper \cite{law}. In particular one has the adjunction inequality: {\em let $X$ be a compact, oriented, smooth $4$-manifold with $b^+(X)>1$. Suppose $\Sigma \to X$ is an embedded oriented surface of genus $g$ and non-negative self-intersection.
\begin{itemize}
\item{If $g \ge 1$ then
\[
2g-2 \ge [\Sigma]^2 + | \langle [\Sigma] , c \rangle |
\]
for every basic class $c \in H^2( X ; \mathbb{Z})$ (recall that $c$ is {\em basic} if $c = c_1(\mathfrak{s})$ for some spin$^c$-structure $\mathfrak{s}$ with non-zero Seiberg-Witten invariant).}
\item{If $g=0$ and a basic class exists, then $[\Sigma]$ is a torsion class.}
\end{itemize}}

In this paper we consider an extension of the minimal genus problem where the embedded surface is required to satisfy an additional constraint. Using a parametrised version of Seiberg-Witten theory, we obtain adjunction inequalities giving a lower bound on the genus of such surfaces. These adjunction inequalities can be non-trivial even on $4$-manifolds with trivial Seiberg-Witten invariants.

Let $X$ be a compact, oriented, smooth $4$-manifold and $f : X \to X$ an orientation preserving diffeomorphism. Assume further that $b^+(X) \ge 3$. We are interested in studying embedded surfaces $i :\Sigma \to X$ such that $\Sigma$ is smoothly isotopic to $f(\Sigma)$. That means there is a smooth family of embeddings $\{ i_t : \Sigma \to X\}_{0 \le t \le 1}$ such that $i_0 = i$ and $i_1 = f \circ i$. Our adjunction inequality will be expressed in terms of certain classes in $H^2(X ; \mathbb{Z})$ which we call {\em $f$-basic}. Let $\mathfrak{s}$ be a spin$^c$-structure on $X$ which is preserved by $f$ and satisfying $d(X,\mathfrak{s}) = -1$, where 
\[
d(X , \mathfrak{s}) = \frac{ c_1(\mathfrak{s})^2 - \sigma(X)}{4} - 1 + b_1(X) - b^+(X)
\]
is the virtual dimension of the Seiberg-Witten moduli space associated to $(X,\mathfrak{s})$. In this case one can associate to $f$ a Seiberg-Witten invariant $SW^{\mathbb{Z}_2}(X , f , \mathfrak{s}) \in \mathbb{Z}_2$ (\cite{rub1}, \cite{bk}). Roughly, $SW^{\mathbb{Z}_2}(X , f , \mathfrak{s})$ is a mod $2$ count of the number of solutions of the Seiberg-Witten equations for the $1$-parameter family of $4$-manifolds given by the mapping torus of $f$. If $f$ satisfies an additional orientability condition (see \textsection \ref{sec:swdiff}) then we can count solutions with sign, giving an integer valued invariant $SW^{\mathbb{Z}}(X , f , \mathfrak{s}) \in \mathbb{Z}$ which reduces mod $2$ to $SW^{\mathbb{Z}_2}(X , f , \mathfrak{s})$. We refer to $SW^{\mathbb{Z}_2}(X , f , \mathfrak{s})$ and $SW^{\mathbb{Z}}(X , f , \mathfrak{s})$ (when defined) as the Seiberg-Witten invariants of $f$.

\begin{definition}\label{def:basic}
We say that a cohomology class $c \in H^2(X ; \mathbb{Z})$ is {\em $f$-basic} if $c = c_1(\mathfrak{s})$ for a spin$^c$-structure $\mathfrak{s}$ on $X$ which is preserved by $f$, satisfies $d(X , \mathfrak{s}) = -1$ and either $SW^{\mathbb{Z}_2}(X , f , \mathfrak{s})$ or $SW^{\mathbb{Z}}(X , f , \mathfrak{s})$ (if defined) is non-zero.
\end{definition}

Our adjunction inequality for embedded surfaces isotopic to their image under $f$ is as follows:

\begin{theorem}\label{thm:fadjunction}
Let $X$ be a compact, oriented smooth $4$-manifold with $b^+(X) \ge 3$ and let $f : X \to X$ be an orientation preserving diffeomorphism. Let $\Sigma \to X$ be an embedded oriented surface of genus $g$ with $[\Sigma]^2 \ge 0$ and suppose that $\Sigma$ is smoothly isotopic to $f(\Sigma)$.
\begin{itemize}
\item{If $g \ge 1$, then $\Sigma$ satisfies the adjunction inequality
\[
2g-2 \ge [\Sigma]^2 + | \langle [\Sigma] , c \rangle |.
\]
for every $f$-basic class $c$.
}
\item{If $g=0$ and an $f$-basic class exists, then $[\Sigma]$ is a torsion class.}
\end{itemize}

\end{theorem}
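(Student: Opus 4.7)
The plan is to adapt the standard neck-stretching proof of the adjunction inequality (as in Kronheimer--Mrowka, Morgan--Szab\'o--Taubes, Ozsv\'ath--Szab\'o) to the parametrised Seiberg--Witten setting. The hypothesis that $\Sigma$ is smoothly isotopic to $f(\Sigma)$ is what allows the standard local geometric input to be inserted into the one-parameter family of $4$-manifolds used to define $SW^{\mathbb{Z}_2}(X,f,\mathfrak{s})$ and $SW^{\mathbb{Z}}(X,f,\mathfrak{s})$. Concretely, I would pick a smooth isotopy $\{\phi_t\}_{t\in[0,1]}$ with $\phi_0 = \mathrm{id}_X$ and $\phi_1(\Sigma) = f(\Sigma)$; then $\Sigma_t := \phi_t(\Sigma)$ is a smooth family of embedded surfaces in $X$ whose total space $\tilde{\Sigma} \subset X_f$ (the mapping torus of $f$) is an embedded $3$-submanifold fibering over $S^1$ with fiber $\Sigma$.

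Next I would construct a smooth family of metrics $(g_t)_{t \in S^1}$ and perturbations for which each $g_t$ restricts, on a tubular neighborhood of $\Sigma_t$, to a chosen standard form (for instance a disc bundle metric determined by a fixed metric on $\Sigma$ together with the normal line bundle). I would then deform this family by inserting a cylindrical neck of length $L$ along the boundary of this neighborhood, uniformly in $t$, producing a one-parameter deformation $(g_t^L)$ through parametrised metrics on $X_f$. Since $f$ preserves $\mathfrak{s}$, the spin$^c$ structure extends to $X_f$ and defines a parametrised moduli space of expected dimension $d(X,\mathfrak{s}) + 1 = 0$; the assumption $b^+(X) \ge 3$ ensures that generic one-parameter homotopies through such families avoid reducibles, so $SW^{\mathbb{Z}_2}(X,f,\mathfrak{s})$ and $SW^{\mathbb{Z}}(X,f,\mathfrak{s})$ are unchanged along the deformation.

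The core analytic step, carried out fiberwise, is the classical Weitzenb\"ock argument: if the inequality $2g-2 \ge [\Sigma]^2 + |\langle[\Sigma],c\rangle|$ were to fail, then on the stretched tubular neighborhood of $\Sigma_t$ the curvature and spinor $C^0$-bounds would force any solution of the Seiberg--Witten equations for $(X,g_t^L,\mathfrak{s})$ to become inconsistent as $L \to \infty$. By compactness of $S^1$, the estimates hold uniformly in $t$, so for $L$ sufficiently large the parametrised moduli space is empty and both invariants of $f$ vanish, contradicting the hypothesis that $c$ is $f$-basic. The genus-zero statement then follows by applying the inequality just proved to a surface obtained by tubing $n$ parallel copies of $\Sigma$: such a surface represents $n[\Sigma]$, has genus $n-1$ and self-intersection zero, and remains isotopic to its image under $f$ (using the isotopy of $\Sigma$); combined with the characteristic parity $\langle c,[\Sigma]\rangle \equiv [\Sigma]^2 \pmod 2$, the resulting pairing bounds force $\langle c,[\Sigma]\rangle = 0$ for every $f$-basic class, from which the torsion conclusion follows by standard arguments.

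The principal difficulty I anticipate is the family-wise realization of the neck-stretching deformation inside the space of parametrised metrics compatible with the mapping torus structure, together with the need for the Weitzenb\"ock and compactness estimates to hold uniformly in the circle parameter. A further subtlety is tracking the orientation/sign data required to upgrade the mod $2$ vanishing to the integer vanishing $SW^{\mathbb{Z}}(X,f,\mathfrak{s}) = 0$ when the orientability hypothesis of \textsection\ref{sec:swdiff} holds.
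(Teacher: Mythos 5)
The central analytic step in your outline assumes something that is false in general: that the fibrewise metric $\{g_t\}_{t\in S^1}$ on the mapping torus can be chosen so that every $g_t$ restricts near $\Sigma_t$ to one fixed standard disc-bundle form, independent of $t$. After the reduction, via the isotopy extension theorem, to a diffeomorphism with $f\circ j=j$ (this reduction is implicit in your closing-up of $\bigcup_t\Sigma_t$ inside the mapping torus), the monodromy of the family of tubular neighbourhoods is the normal twist $w\mapsto\varphi(\pi(w))w$ for some $\varphi:\Sigma\to SO(2)$, and the class $[\varphi]\in H^1(\Sigma;\mathbb{Z})$ is in general non-zero. When it is non-zero, no metric on the boundary $Y=S^1\times\Sigma$ is invariant under the twist $f_Y(x,w)=(x+\varphi(w),w)$: indeed $f_Y^*$ acts on $H^1(Y;\mathbb{R})$ by the infinite-order unipotent map $[dx]\mapsto[dx]+[\mu]$, $\mu=d\varphi$, and such an element cannot lie in the image of the compact isometry group of any metric. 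Hence the closing condition $g_1=f^*(g_0)$ forces the metric on the neck cross-section to genuinely vary with $t$, and the uniform-in-$t$ Weitzenb\"ock estimate you invoke is exactly what must be proved; it is not a consequence of compactness of $S^1$. The paper's proof confronts this directly: it takes $g_{Y,t}=t\,f_Y^*(g_{Y,0})+(1-t)g_{Y,0}$, isotopes $f$ so that $\varphi$ is harmonic, rescales $g_\Sigma$ by a large constant $\lambda$, and then Lemma \ref{lem:scalcurv} together with Lemma \ref{lem:cs} gives $-s_t\le 8\pi(g-1)/\mathrm{vol}(\Sigma)$ for all $t$, which is what makes the Kronheimer--Mrowka estimate applicable to the translation-invariant limit solution on $\mathbb{R}\times Y$ obtained after stretching. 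Without this ingredient (or a substitute) your argument fails precisely when the normal twist is homologically non-trivial. (Two further points of contrast, not gaps in themselves: the paper passes to the unperturbed equations and allows reducible solutions in the limit, rather than arguing emptiness of a perturbed moduli space; and it treats $[\Sigma]^2>0$ not by stretching along a non-trivial circle bundle but by blowing up at points of $\Sigma$ where $f$ has been isotoped to be the identity and using the families blow-up formula to reduce to the square-zero case.)

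Your genus-zero step is also insufficient. Even granting the tubing construction (which itself needs the observation, special to $g=0$, that $\varphi$ is null-homotopic, so that $f$ can be isotoped to be the identity on a neighbourhood of $\Sigma$ containing the parallel copies and tubes), applying the $g\ge1$ inequality to surfaces of genus $n-1$ in the classes $n[\Sigma]$ yields only $\langle c,[\Sigma]\rangle=0$ for every $f$-basic class $c$. That does not imply $[\Sigma]$ is torsion: there may be a single $f$-basic class, and many non-torsion classes are orthogonal to it, so no ``standard argument'' finishes from there. The paper argues differently: it blows up inside the region where $f$ is the identity, tubes $n$ parallel copies of $\Sigma$ into the exceptional sphere to obtain an embedded $(-1)$-sphere in the class $n[\Sigma]+S$, blows it down, and applies the families blow-up formula repeatedly to conclude that $c-S-2n[\Sigma]$ is $f'$-basic for every $n$; finiteness of basic classes then forces $[\Sigma]$ to be torsion. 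You would need this, or some other mechanism producing infinitely many $f$-basic classes, to reach the stated conclusion.
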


Note Theorem \ref{thm:fadjunction} can be used as a vanishing theorem for the Seiberg-Witten invariants of diffeomorphisms. Namely if there exists an embedded surface $\Sigma$ which is isotopic to its image under $f$ and which violates the adjunction inequality for some characteristic $c$, then $c$ is not a basic class.

To extract interesting results from Theorem \ref{thm:fadjunction}, we need to know examples of diffeomorphisms whose associated Seiberg-Witten invariants are non-trivial. A large class of examples can be obtained by a connected sum construction \cite{rub3}, \cite{bk}. As an application of Theorem \ref{thm:fadjunction} combined with the connected sum construction, we obtain examples of infinite classes of embedded surfaces which are all continuously isotopic but mutually non-isotopic smoothly. Before stating the result we comment on some related works. The paper \cite{fs} constructs embedded surfaces which are mapped to each other by an ambient homeomorphism but can not be mapped to each other by an ambient diffeomorphism. The paper \cite{ak} constructs examples of non-isotopic $2$-spheres which become isotopic on connected sum with $S^2 \times S^2$ and \cite{akmr} constructs infinite families of topologically isotopic but smoothly non-isotopic embedded $2$-spheres which become isotopic on connected sum with $S^2 \times S^2$.

Recall that a class $u \in H^2(X ; \mathbb{Z})$ is called {\em ordinary} if it not characteristic.

\begin{theorem}\label{thm:isotopysurface}
Let $X$ be one of the $4$-manifolds 
\begin{itemize}
\item[(i)]{$\#^{n} (S^2 \times S^2) \#^n K3$ for $n \ge 2$, or}
\item[(ii)]{$\#^{2n} \mathbb{CP}^2 \#^{m} \overline{\mathbb{CP}^2}$ for $n \ge 2$, $m \ge 10n+1$.}
\end{itemize}
Then there exists a diffeomorphism $f : X \to X$ such that:
\begin{itemize}
\item[(1)]{$f$ is homotopic to the identity.}
\item[(2)]{Let $u \in H^2(X ; \mathbb{Z})$ be any class with $u^2 > 0$. If $\Sigma \to X$ is an embedded surface of genus $g$ representing $u$ and $f^n(\Sigma)$, $f^m(\Sigma)$ are smoothly isotopic for some $n \neq m$, then $2g-2 \ge u^2$.}
\item[(3)]{Let $u \in H^2(X ; \mathbb{Z})$ satisfy $u^2 > 0$. In case (ii) assume moreover that u is a multiple of a primitive ordinary class. Then there exists an embedded surface $\Sigma \to X$ representing $u$ such that the surfaces $\{ f^n(\Sigma) \}_{n \in \mathbb{Z} }$ are all continuously isotopic but mutually non-isotopic smoothly. If $u$ is primitive then $\Sigma$ can be taken to have genus zero. In all other cases we can take $\Sigma$ to have genus $g$ satisfying $2g-2 < u^2$.}
\end{itemize}
\end{theorem}

\begin{remark}
In Theorem \ref{thm:isotopysurface} (3), if $u$ is primitive then it is in addition possible to choose the embedding $\Sigma \to X$ to have simply-connected complement. It follows that the smoothly non-isotopic surfaces $\{ f^n(\Sigma) \}$ are all smoothly isotopic after taking connected sum with a single copy of $S^2 \times S^2$, according to the main theorem of \cite{akmrs} (which is proven using the $4$-dimensional light bulb theorem of Gabai \cite{gab}). To be more precise, for any $n,m$, the surfaces $f^n(\Sigma), f^m(\Sigma)$ become isotopic after taking the connected sum of $X$ with $S^2 \times S^2$, summing at a point disjoint from $f^n(\Sigma) \cup f^m(\Sigma)$.
\end{remark}

It is interesting to contrast Theorem \ref{thm:isotopysurface} with the case of knots in $S^3$. Cerf's theorem \cite{cer} implies that if two smooth knots in $S^3$ are mapped onto one another by an orientation preserving diffeomorphism, then the two knots are ambiently isotopic. An analogous problem for embedded surfaces in smooth $4$-manifolds was posed in \cite{akmr}:\\

\noindent {\bf Problem 1:} Suppose two embedded surfaces in a smooth $4$-manifold are mapped onto one another by a diffeomorphism homotopic to the identity. Then are the surfaces smoothly isotopic?\\

The authors of \cite{akmr} suspected that Problem 1 does not have a positive solution in general. Here we confirm that this is indeed the case.

\begin{theorem}
The analogue of Cerf's theorem for embedded surfaces in $4$-manifolds (Problem 1) fails in general: there exist compact smooth $4$-manifolds $X$, embedded surfaces $i_1, i_2 : \Sigma \to X$ and a diffeomorphism $f : X \to X$ homotopic to the identity for which $i_2 = f \circ i_1$, but $i_1, i_2$ are not smoothly isotopic.
\end{theorem}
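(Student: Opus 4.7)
The plan is to obtain the theorem as an immediate corollary of Theorem~\ref{thm:isotopysurface}. First, I would pick any of the $4$-manifolds $X$ listed there---say case (i) with, e.g., $n=0$, $m=2$, giving $X = \#^4(S^2\times S^2)$---and a class $u \in H^2(X;\mathbb{Z})$ with $u^2 > 0$, satisfying the extra hypothesis in case (ii) if relevant. Applying parts (1) and (3) of Theorem~\ref{thm:isotopysurface} to this data, I obtain a diffeomorphism $f : X \to X$ homotopic to the identity, together with an abstract oriented surface (which I will also call $\Sigma$) of the appropriate genus $g$ and an embedding $i_1 : \Sigma \to X$ representing $u$, for which the orbit $\{f^n(i_1(\Sigma))\}_{n \in \mathbb{Z}}$ consists of pairwise smoothly non-isotopic submanifolds of $X$.

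Next, I would define $i_2 := f \circ i_1 : \Sigma \to X$. By construction the identity $i_2 = f \circ i_1$ is automatic, and the image of $i_2$ is exactly $f(i_1(\Sigma))$. The $n=0$, $m=1$ case of Theorem~\ref{thm:isotopysurface}(3) then says that $i_1(\Sigma)$ and $i_2(\Sigma)$ are not smoothly isotopic as submanifolds of $X$. If $i_1$ and $i_2$ were isotopic through a smooth family of embeddings, then by the isotopy extension theorem their images would be ambiently smoothly isotopic as submanifolds, a contradiction. Hence $i_1, i_2$ are not smoothly isotopic, $f$ is homotopic to the identity, and $i_2 = f\circ i_1$---exactly the counterexample required by Problem~1.

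All of the genuine content of the argument is absorbed into Theorem~\ref{thm:isotopysurface}, whose proof relies on the adjunction inequality of Theorem~\ref{thm:fadjunction} together with the connected-sum construction producing diffeomorphisms with nontrivial Seiberg--Witten invariants; once that theorem is in hand, the deduction presented here is essentially formal. Thus the only obstacle to overcome is not conceptual but notational: verifying that non-isotopy of the images $i_1(\Sigma)$ and $i_2(\Sigma) = f(i_1(\Sigma))$, as delivered by Theorem~\ref{thm:isotopysurface}(3), implies non-isotopy of the embeddings themselves, which is a standard consequence of isotopy extension.
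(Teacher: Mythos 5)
Your proposal is correct and follows essentially the same route as the paper: the theorem is deduced as an immediate corollary of Theorem~\ref{thm:isotopysurface}, taking $i_2 = f\circ i_1$ and invoking the pairwise non-isotopy of the orbit $\{f^n(\Sigma)\}$; your extra remark that isotopy of embeddings would force ambient isotopy of the images (via isotopy extension) is a harmless and correct bookkeeping step that the paper leaves implicit.
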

\begin{proof}
Let $X$ and $f : X \to X$ be as in the statement of Theorem \ref{thm:isotopysurface}. Then according to this theorem, there exists embedded surfaces $\Sigma \to X$ such that $\Sigma$ and $f(\Sigma)$ are not smoothly isotopic. On the other hand the diffeomorphism $f$, which is homotopic to the identity, maps $\Sigma$ onto $f(\Sigma)$.
\end{proof}

We have just shown that if $X$ is one of the $4$-manifolds in the statement of Theorem \ref{thm:isotopysurface}, then Problem 1 fails in a very strong sense. An interesting problem would be to find examples of compact, smooth $4$-manifolds where Problem 1 has a positive solution, if there are any.

A brief outline of the contents of this paper is as follows. In Section \ref{sec:swdiff} we recall the definition of the Seiberg-Witten invariants of a diffeomorphism and recall how the gluing procedure of \cite{bk} can be used to produce examples of diffeomorphisms for which this invariant is non-zero. In Section \ref{sec:adj} we prove the main result of this paper, Theorem \ref{thm:fadjunction}. In Section \ref{sec:noniso} we give our main application of Theorem \ref{thm:fadjunction}, which is the construction of infinite families of embedded surfaces which are continuously isotopic but mutually non-isotopic smoothly.\\

\noindent{\bf Acknowledgments.} The author was financially supported by the Australian Research Council Discovery Project DP170101054.

%%%%%%%%%%%%%%%%%%%%%%%%%%%%%%%%%%%
\section{Seiberg-Witten invariants of diffeomorphisms}\label{sec:swdiff}

Let $X$ be a compact, oriented, smooth $4$-manifold and $f : X \to X$ an orientation preserving diffeomorphism. Assume also that $b^+(X) \ge 3$. In this section we recall how one can define a notion of Seiberg-Witten invariants for $f$, depending only on the smooth isotopy class of $f$.

Let $E_f \to S^1$ denote the mapping torus of $f$, which we think of as being a smooth family of $4$-manifolds parametrised by the circle. Let $\mathfrak{s}$ be a spin$^c$-structure on $X$ which is preserved by $f$ and satisfies $d(X,\mathfrak{s}) = -1$, where 
\[
d(X , \mathfrak{s}) = \frac{ c_1(\mathfrak{s})^2 - \sigma(X)}{4} - 1 + b_1(X) - b^+(X)
\]
is the virtual dimension of the Seiberg-Witten moduli space associated to $(X,\mathfrak{s})$. Then as described in \cite{bk}, after fixing a choice of a smoothly varying family of metrics and self-dual $2$-form perturbations, we can construct a families Seiberg-Witten moduli space $\mathcal{M}(E_f , \mathfrak{s})$ parametrising solutions to the Seiberg-Witten equations on the fibres of $E_f$. For sufficiently generic perturbations, the moduli space $\mathcal{M}(E_f , \mathfrak{s})$ is a compact manifold of dimension $d(X , \mathfrak{s}) + 1 = 0$. By the usual cobordism argument, the number of points of $\mathcal{M}(E_f , \mathfrak{s})$ counted mod $2$ is independent of the choice of families metric and perturbation and hence defines an invariant
\[
SW^{\mathbb{Z}_2}( X , f , \mathfrak{s}) \in \mathbb{Z}_2,
\]
which we shall refer to as the Seiberg-Witten invariant of $(X,f , \mathfrak{s})$ (or less precisely as the Seiberg-Witten invariants of $f$).

The mod $2$ Seiberg-Witten invariants of $f$ can be lifted to integer invariants provided $f$ satisfies an orientability condition which we now describe. The Grassmannian $G_+$ of oriented maximal positive definite subspaces of $H^2(X ; \mathbb{R})$ has two connected components corresponding to the two possible orientations one can put on any particular maximal positive definite subspace. The diffeomorphism $f : X \to X$ is orientation preserving, so induces an isometry of $H^2(X ; \mathbb{R})$, which in turn induces a continuous map of $G_+$ to itself. Let us define $sgn_+(f) = \pm 1$ to be $+1$ if $f$ sends each component of $G_+$ to itself and $sgn_+(f) = -1$ if $f$ exchanges the two components of $G_+$.

Suppose that $sgn_+(f)=1$. In this case it follows from \cite{bk} that the families moduli space $\mathcal{M}(E_f , \mathfrak{s})$ is orientable. Moreover the two possible orientations of $\mathcal{M}(E_f , \mathfrak{s})$ correspond in a natural way to the two connected component of $G_+$. From this it can be seen that if the metric and perturbation are varied, then the cobordism between the two moduli spaces can be promoted to an oriented cobordism. Therefore if $sgn_+(f)=1$ then (after choosing a connected component of $G_+$) we obtain an integer invariant
\[
SW^{\mathbb{Z}}(X , f , \mathfrak{s}) \in \mathbb{Z}
\]
by counting with signs the number of points of $\mathcal{M}(E_f , \mathfrak{s})$.

\begin{remark}
The Seiberg-Witten invariants of $f$, when defined, are invariants of the smooth isotopy class of $f$. One sees this by noting that the underlying family of $4$-manifolds $E_f$ given by the mapping torus of $f$ is determined up to isomorphism by the isotopy class of $f$.

In the case $f = id$ one easily sees that the Seiberg-Witten invariants of $f$ are zero because the family $E_f$ is just the constant family over $S^1$ and the virtual dimension $d(X , \mathfrak{s}) = -1$ of the unparametrised Seiberg-Witten moduli space is negative. Therefore if an orientation preserving diffeomorphism $f$ has non-zero Seiberg-Witten invariants, it follows that $f$ is not smoothly isotopic to the identity. This observation was used in \cite{rub3} and \cite{bk} to construct examples of diffeomorphisms which are continously isotopic to the identity but not smoothly isotopic. Although it is not the main focus of this paper, we will construct new examples of this phenomenon below.
\end{remark}

Let $M$ be a smooth compact simply-connected $4$-manifold. We will say that $M$ {\em dissolves on connected sum with $S^2 \times S^2$} if the connected sum $M \# (S^2 \times S^2)$ is diffeomorphic to
\[
\#^n \mathbb{CP}^2 \#^m \overline{\mathbb{CP}^2} \; \; \text{or} \; \; \#^n (S^2 \times S^2) \#^m K3
\]
for some $n,m \ge 0$, where $K3$ denotes the underlying $4$-manifold of a $K3$ surface. All examples that we consider will have non-positive signature, so the diffeomorphism from $M \# (S^2 \times S^2)$ to $\#^n \mathbb{CP}^2 \#^m \overline{\mathbb{CP}^2} \; \; \text{or} \; \; \#^n (S^2 \times S^2) \#^m K3$ can taken to be orientation preserving.

The following result is a special case of \cite[Theorem 9.7]{bk} and is proven using a gluing formula for the families Seiberg-Witten invariant:
\begin{theorem}\label{thm:dissolve1}
Let $M$ be a compact simply-connected smooth $4$-manifold with $b^+(M)>1$ and $\sigma(M) \le 0$. Assume also that $M$ is not homeomorphic to $K3$. Suppose that $M$ dissolves on connected sum with $S^2 \times S^2$ and that there exists a spin$^c$-structure $\mathfrak{s}$ on $M$ with $d(M , \mathfrak{s}) = 0$ and $SW(M , \mathfrak{s}) = 1 \; ({\rm mod} \; 2)$. Let $X = M \# (S^2 \times S^2)$. Then there exists a diffeomorphism $f : X \to X$ such that $f$ is continuously isotopic to the identity and a spin$^c$-structure $\mathfrak{s}_X$ on $X$ such that $d(X , \mathfrak{s}_X) = -1$ and $SW^{\mathbb{Z}}(X , f , \mathfrak{s}_X) \neq 0$.
\end{theorem}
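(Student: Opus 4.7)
The plan is to build $f$ by a connected-sum gluing construction in the style of Ruberman \cite{rub3} and the families Seiberg-Witten framework of \cite{bk}, then to read off $SW^{\mathbb{Z}}(X,f,\mathfrak{s}_X)\ne 0$ from a families gluing formula. First fix the spin$^c$-structure on $X=M\#(S^2\times S^2)$: let $\mathfrak{s}_X$ be the connected sum of $\mathfrak{s}$ with the (essentially unique) trivial spin$^c$-structure on $S^2\times S^2$. Since $b^+(X)=b^+(M)+1$, $b_1(X)=0$, $\sigma(X)=\sigma(M)$, and $c_1(\mathfrak{s}_X)^2=c_1(\mathfrak{s})^2$, the formula for $d$ gives $d(X,\mathfrak{s}_X)=d(M,\mathfrak{s})-1=-1$, matching the requirement of the theorem.

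Next I would produce the parametrised family. Although $b^+(S^2\times S^2)=1$ precludes an ordinary basic class, the space of metrics on $S^2\times S^2$ has a wall structure, and there is a standard loop $\gamma$ of Riemannian metrics crossing a single wall whose families Seiberg-Witten wall-crossing number is $\pm 1$. Performing a neck-stretched connected sum of $\gamma$ with the constant family $M\times S^1$ yields a $1$-parameter family $E\to S^1$ of $4$-manifolds diffeomorphic to $X$; its monodromy is the desired diffeomorphism $f\colon X\to X$. The dissolving hypothesis identifies the smooth structure on the fibre with the standard one; because $f$ is built from a loop of metrics rather than a cohomologically nontrivial diffeomorphism it acts trivially on $H^2(X;\mathbb{Z})$, and Freedman--Quinn then gives that $f$ is continuously isotopic to the identity.

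The analytic heart is the families gluing formula of \cite[Theorem 9.7]{bk}. Stretching the neck of $E$ splits solutions of the parametrised Seiberg-Witten equations into pairs consisting of an ordinary Seiberg-Witten solution on $(M,\mathfrak{s})$ and a wall-crossing contribution from $\gamma$ on the $S^2\times S^2$ side, yielding schematically
\[
SW^{\mathbb{Z}}(X,f,\mathfrak{s}_X) \;=\; \pm\, SW(M,\mathfrak{s})\cdot \mathrm{WC}(\gamma),
\]
with $\mathrm{WC}(\gamma)\in\{\pm 1\}$. Since $SW(M,\mathfrak{s})$ is odd by hypothesis and $\mathrm{WC}(\gamma)$ is a unit, the right-hand side is nonzero. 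To promote this from the $\mathbb{Z}_2$-invariant to the integer invariant I would verify $\mathrm{sgn}_+(f)=+1$, which is immediate from the construction because $f$ acts trivially on $H^2(X;\mathbb{R})$ and therefore fixes each component of $G_+$.

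The main obstacle is the gluing formula itself: tracking orientations through the neck-stretching, ruling out spurious contributions from extraneous pairs of basic classes on the $S^2\times S^2$ side (which requires fine control over the family moduli on the neck), and verifying that $\mathrm{sgn}_+(f)=+1$ is compatible with the sign chosen in the definition of $SW^{\mathbb{Z}}$. The hypotheses $\sigma(M)\le 0$ and $M\not\cong K3$ enter precisely at this step, ensuring that the intersection form of $X$ is of a form for which the gluing in \cite{bk} is clean and that no degenerate contributions from the $K3$ lattice enter to cancel the nonzero product above.
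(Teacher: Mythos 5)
Your construction of $f$ does not produce a diffeomorphism with nonzero invariant: the total space you describe is the product family $S^1\times X$ equipped with a non-constant family of metrics (a loop of metrics on the $S^2\times S^2$ side glued to a constant family on $M$), and the monodromy of such a family is the identity. As the paper itself observes, $SW^{\mathbb{Z}}(X,\mathrm{id},\mathfrak{s}_X)=0$ because the family is constant and $d(X,\mathfrak{s}_X)=-1<0$. The ``wall-crossing number of $\gamma$'' you invoke is not visible in the invariant: $SW^{\mathbb{Z}}(X,f,\mathfrak{s}_X)$ is defined precisely because, when $b^+(X)\ge 3$ (here $b^+(X)=b^+(M)+1\ge 3$), the count is independent of the chosen loop of fibrewise metrics, so no choice of metric loop on a summand can make the invariant of the trivial monodromy nonzero. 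Wall-crossing for $b^+=1$ pieces only enters through a genuine diffeomorphism whose pullback action forces the metric path to cross walls; a bare loop of metrics carries no such obligation and can be contracted. Consequently your schematic formula $SW^{\mathbb{Z}}(X,f,\mathfrak{s}_X)=\pm SW(M,\mathfrak{s})\cdot\mathrm{WC}(\gamma)$ has no candidate $f$ behind it.

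This also means the hypotheses are not doing what you say. In the paper, ``$M$ dissolves'', $\sigma(M)\le 0$ and $M$ not homeomorphic to $K3$ are used to manufacture a \emph{companion manifold} $M'$ (namely $\#^{a-1}\mathbb{CP}^2\#^{b-1}\overline{\mathbb{CP}^2}$ or $\#^{n-1}(S^2\times S^2)\#^m K3$) that is homeomorphic to $M$, becomes diffeomorphic to $M$ after one stabilization, and is reducible, hence has vanishing Seiberg--Witten invariants; the $K3$ exclusion is exactly to rule out $M'=K3$. Then \cite[Theorem 9.7]{bk} applied to the pair $(M,M')$ yields two diffeomorphisms $f_1,f_2$ of $X$ with the same action on $H^2$ and $SW^{\mathbb{Z}_2}(X,f_1,\mathfrak{s}_X)\neq SW^{\mathbb{Z}_2}(X,f_2,\mathfrak{s}_X)$ (the gluing formula computes the difference as $SW(M,\mathfrak{s})-SW(M',\mathfrak{s}')$ mod $2$, which is odd); setting $f=f_1\circ f_2^{-1}$, Ruberman's composition formula \cite[Lemma 2.6]{rub1} gives $SW^{\mathbb{Z}_2}(X,f,\mathfrak{s}_X)\neq 0$, triviality of $f_*$ on $H^2$ gives $sgn_+(f)=1$ and, via Quinn, topological isotopy to the identity. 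Your proposal never produces $M'$, never uses the dissolving hypothesis for this purpose, and attributes $\sigma(M)\le 0$ and the $K3$ exclusion to vague ``cleanness'' of the gluing, so the essential mechanism of the proof is missing.
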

\begin{proof}
Since $M$ dissolves on connected sum with $S^2 \times S^2$ and $\sigma(M)\le 0$, we have that $X = M \# (S^2 \times S^2)$ is diffeomorphic to either $\#^a \mathbb{CP}^2 \#^b \overline{\mathbb{CP}^2}$ for some $a,b$, if $M$ is not spin or $\#^n (S^2 \times S^2) \#^m K3$ for some $n,m$ if $M$ is spin. Clearly $a,b > 2$ in the non-spin case and $n \ge 1$ in the spin case. Let $M'$ be given by $\#^{a-1} \mathbb{CP}^2 \#^{b-1} \overline{\mathbb{CP}^2}$ if $M$ is not spin and $M' = \#^{n-1} (S^2 \times S^2) \#^m K3$ if $M$ is spin. Then $M,M'$ are homeomorphic and become diffeomorphic on connected sum with $S^2 \times S^2$. Moreover, all of the Seiberg-Witten invariants of $M'$ are zero since in all cases $M'$ is a connected sum of two smooth $4$-manifolds which are not negative definite (note that we explicitly required $M$ not to be homeomorphic to $K3$ in order to rule out the case $n=0,m=1$ which would give $M' = K3$).

Now $M,M'$ satisfy the conditions of \cite[Theorem 9.7]{bk}. According to the proof of \cite[Theorem 9.7]{bk}, there exist diffeomorphisms $f_1,f_2 : X \to X$ inducing the same action on $H^2(X ; \mathbb{Z})$ and a spin$^c$-structure $\mathfrak{s}_X$ with $d(X , \mathfrak{s}_X) = -1$ preserved by $f_1$ and $f_2$ and such that $SW^{\mathbb{Z}_2}(X , f_1 , \mathfrak{s}_X) \neq SW^{\mathbb{Z}_2}(X , f_2 , \mathfrak{s}_X)$. Let $f = f_1 \circ f_2^{-1}$. Then $f$ acts trivially on $H^2(X ; \mathbb{Z})$, which by \cite{qui} implies that $f$ is continuously isotopic to the identity. By \cite[Lemma 2.6]{rub1}, we have
\[
SW^{\mathbb{Z}_2}(X , f_1 \circ f_2^{-1} , \mathfrak{s}_X) = SW^{\mathbb{Z}_2}(X , f_1 , \mathfrak{s}_X) - SW^{\mathbb{Z}_2}(X , f_2 , \mathfrak{s}_X)
\]
and hence $SW^{\mathbb{Z}_2}(X , f , \mathfrak{s}_X) \neq 0$. Of course this implies that $SW^{\mathbb{Z}}(X , f , \mathfrak{s}_X)$ is also non-zero. Note also that $SW^{\mathbb{Z}}(X , f , \mathfrak{s}_X)$ is defined because $sgn_+(f) = 1$, as $f$ acts trivially on $H^2(X ; \mathbb{Z})$.
\end{proof}

\begin{corollary}\label{cor:ctsnotsmooth}
Let $X$ be one of the $4$-manifolds 
\begin{itemize}
\item[(i)]{$\#^{n} (S^2 \times S^2) \#^n K3$ for any $n \ge 2$, or}
\item[(ii)]{$\#^{2n} \mathbb{CP}^2 \#^{m} \overline{\mathbb{CP}^2}$ for $n \ge 2$, $m \ge 10n+1$.}
\end{itemize}
Then there exists a diffeomorphism $f : X \to X$ such that $f$ is continuously isotopic to the identity and a spin$^c$-structure $\mathfrak{s}$ on $X$ such that $d(X , \mathfrak{s}) = -1$ and $SW^{\mathbb{Z}}(X , f , \mathfrak{s}) \neq 0$, hence $f$ is not smoothly isotopic to the identity.
\end{corollary}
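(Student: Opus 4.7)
The plan is to apply Theorem \ref{thm:dissolve1} to each $X$ in cases (i) and (ii) by exhibiting a compact simply-connected smooth $4$-manifold $M$ with $M \# (S^2 \times S^2) \cong X$ that satisfies the hypotheses of Theorem \ref{thm:dissolve1}: namely $b^+(M) > 1$, $\sigma(M) \leq 0$, $M$ not homeomorphic to $K3$, $M$ dissolves on $\# (S^2 \times S^2)$, and $M$ admits a spin$^c$-structure $\mathfrak{s}$ with $d(M, \mathfrak{s}) = 0$ and $SW(M, \mathfrak{s}) \equiv 1 \pmod 2$.

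For case (i), the target is spin with intersection form $2n(-E_8) \oplus (4n+2m)H$, so $M$ must be spin with intersection form $2n(-E_8) \oplus (4n+2m-1)H$. In the clean subcase $m = 0$, $n \geq 2$, I take $M = E(2n)$, the simply-connected minimal elliptic surface. It is K\"ahler spin with $b^+(M) = 4n-1 > 3$ (hence $M \not\cong K3$) and $\sigma(M) = -16n < 0$. Its canonical spin$^c$-structure $\mathfrak{s}_{\mathrm{can}}$ satisfies $c_1(\mathfrak{s}_{\mathrm{can}}) = -K_M = -(2n-2)[F]$, where $[F]$ is the fiber class; since $F^2 = 0$, one has $c_1^2 = 0$, $d(M, \mathfrak{s}_{\mathrm{can}}) = 0$ and $SW(M, \mathfrak{s}_{\mathrm{can}}) = \pm 1$ by the standard K\"ahler Seiberg--Witten computation. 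Gompf's diffeomorphism $E(2n) \# (S^2 \times S^2) \cong \#^n(S^2 \times S^2) \#^n K3 = X$ verifies simultaneously the dissolve property and the condition $M \# (S^2 \times S^2) \cong X$. For the remaining subranges ($m \geq 1$) one enlarges $M$ without introducing $(S^2 \times S^2)$-summands, which would kill $SW$: this can be achieved via symplectic/fiber sums of $E(2n)$ with further K\"ahler pieces along an elliptic fiber, or by invoking exotic smooth structures with the correct intersection form (Fintushel--Stern knot surgery, Akhmedov--Park constructions) chosen to preserve a Seiberg--Witten basic class.

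For case (ii), $X$ is non-spin with $b^+(X) = 2n$ and $b^-(X) = m$, so $M$ must be non-spin and homeomorphic to $\#^{2n-1}\mathbb{CP}^2 \#^{m-1}\overline{\mathbb{CP}^2}$. When $m \geq 10n$ the explicit choice $M = E(n) \#^{m-10n}\overline{\mathbb{CP}^2}$ works: the blow-up formula produces a spin$^c$-structure $\mathfrak{s}$ with $d(M, \mathfrak{s}) = 0$ and $SW(M, \mathfrak{s}) = \pm 1$ from a basic class of $E(n)$, and $M$ dissolves by Wall stabilization. For $2n+1 \leq m < 10n$ the required $M$ is produced by the exotic small symplectic $4$-manifolds of Akhmedov--Park (and Park); the bounds $n \geq 2$ and $m \geq 2n+1$ correspond precisely to the regime in which such constructions are available with non-vanishing mod-$2$ Seiberg--Witten invariant.

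The main obstacle is producing $M$ uniformly over the full parameter range: the extreme cases admit clean algebro-geometric choices (elliptic surfaces or their blow-ups), but intermediate subranges require invoking more delicate exotic $4$-manifold constructions together with a verification that their intersection forms match the required ones and that their Seiberg--Witten invariants are odd. Once such an $M$ has been exhibited in each case, Theorem \ref{thm:dissolve1} directly yields the diffeomorphism $f : X \to X$ together with the spin$^c$-structure $\mathfrak{s}_X$ satisfying $d(X, \mathfrak{s}_X) = -1$ and $SW^{\mathbb{Z}}(X, f, \mathfrak{s}_X) \neq 0$, and the assertion that $f$ is continuously but not smoothly isotopic to the identity is then immediate.
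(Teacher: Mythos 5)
Your overall strategy is the paper's: exhibit, for each $X$, a simply connected $4$-manifold $M$ with $M \# (S^2 \times S^2) \cong X$ satisfying the hypotheses of Theorem \ref{thm:dissolve1}, then quote that theorem. The clean subcases you treat explicitly are fine in spirit (for case (i) with $m=0$, $n \ge 2$ the paper simply cites \cite[Corollary 9.8]{bk}, which is the same $E(2n)$-type argument). But there is a genuine gap in the ranges you defer to ``more delicate exotic constructions.'' The hypotheses of Theorem \ref{thm:dissolve1} require simultaneously: the correct homeomorphism type, an odd Seiberg--Witten invariant in a $d=0$ spin$^c$-structure, \emph{and} the dissolve-after-one-stabilization property. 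The last of these is not automatic for Fintushel--Stern or Akhmedov--Park type manifolds and is never verified in your proposal; nor is it true that ``the bounds $n \ge 2$, $m \ge 2n+1$ correspond precisely to the regime in which such constructions are available'' --- those bounds actually come from a different mechanism. The paper's input here is \cite[Theorem 5]{hkw} (Hanke--Kotschick--Wehrheim), which supplies simply connected \emph{spin symplectic} $M$ with $b^+(M)>1$ that dissolve after a single $S^2 \times S^2$ stabilization and realize the needed homeomorphism types; Taubes then gives the odd invariant via the canonical spin$^c$-structure, and case (ii) is obtained by blowing up such an $M$ (with $M \# (S^2\times S^2) \cong \#^{2n}(S^2\times S^2)$) at $m-2n \ge 1$ points, which is exactly where the bound $m \ge 2n+1$ comes from. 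Without this (or an equivalent) input, your intermediate ranges are unproved.

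Two further concrete problems: case (i) includes $n=0$, $m \ge 2$, i.e.\ $X = \#^{2m}(S^2\times S^2)$, where ``enlarging $E(2n)$ by fiber sums'' is meaningless --- here one needs a signature-zero manifold homeomorphic to $\#^{2m-1}(S^2\times S^2)$ with odd SW that dissolves after one stabilization, again precisely the \cite{hkw} examples; your proposal does not address this subcase at all. And in case (ii) your ``explicit'' range has an edge-case error: at $m = 10n$ with $n$ even, $M = E(n)$ is spin, so $M \# (S^2\times S^2)$ is spin and cannot be diffeomorphic to the non-spin $X$; you need at least one blow-up (or an odd $n$) there. So the proposal is the right skeleton but is incomplete exactly where the paper's citation of \cite{hkw} does the real work.
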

\begin{proof}
This result was proven in \cite[Corollary 9.8]{bk}. We briefly recall the proof. First recall from \cite{gom} that the elliptic surfaces $E(n)$ dissolve on connected sum with $S^2 \times S^2$. For case (i) we take $M = E(2n)$ which has even intersection form. It follows that $X = M \# (S^2 \times S^2) \cong \#^{n} (S^2 \times S^2) \#^n K3$. Then the result follows from Theorem \ref{thm:dissolve1} and the fact that on a symplectic $4$-manifold with $b^+(M)>1$ (such as $M$) there exists a spin$^c$-structure $\mathfrak{s}$ with $d(M , \mathfrak{s}) = 0$ and $SW(M , \mathfrak{s}) = 1 \; ({\rm mod} \; 2)$, namely the canonical spin$^c$-structure associated to a compatible almost complex structure.

In case (ii), we take $M = E(n) \#^{m-10n} \overline{\mathbb{CP}^2}$, which has odd intersection form since $m - 10n \ge 1$. Then $X = M \# (S^2 \times S^2) \cong \#^{2n} \mathbb{CP}^2 \#^{m} \overline{\mathbb{CP}^2}$. Moreover, since $M$ is the blowup of a symplectic $4$-manifold with $b^+(M)>1$, there exists a spin$^c$-structure $\mathfrak{s}$ with $d(M , \mathfrak{s}) = 0$ and $SW(M , \mathfrak{s}) = 1 \; ({\rm mod} \; 2)$ by the blowup formula. So we can again apply Theorem \ref{thm:dissolve1}.
\end{proof}

\begin{remark}
Let $f : X \to X$ be as in Corollary \ref{cor:ctsnotsmooth}. For any integer $n$, let $f^n$ denote the $n$-fold composition of $f$ with itself. Suppose that $SW^{\mathbb{Z}}(X , f , \mathfrak{s}) = k \neq 0$. Then it follows from \cite[Lemma 2.6]{rub1} that $SW^{\mathbb{Z}}(X , f^n , \mathfrak{s}) = nk$, hence the diffeomorphisms $\{ f^n \}_{n \in \mathbb{Z}}$ are all continuously isotopic to the identity but mutually non-isotopic smoothly.
\end{remark}

%%%%%%%%%%%%%%%%%%%%%%%%%%%%%%%%%%%%%%%
\section{The adjunction inequality}\label{sec:adj}

In this section we will prove Theorem \ref{thm:fadjunction}. Before getting to the proof we need a few preliminary results. Let $\Sigma$ denote a closed oriented surface equipped with a Riemannian metric $g_\Sigma$ and let $\mu$ be a $1$-form on $\Sigma$. For any $t \in [0,1]$ we define a metric $g'$ on $\mathbb{R} \times \Sigma$ by
\[
g' = dx^2 + t(  dx \otimes \mu + \mu \otimes dx + \mu^2) + g_\Sigma,
\]
where $x$ denotes the standard coordinate on $\mathbb{R}$.

\begin{lemma}\label{lem:scalcurv}
Let $s$ denote the scalar curvature of $g_\Sigma$ and $s'$ the scalar curvature of $g'$. Suppose that the $1$-form $\mu$ is harmonic with respect to $g_\Sigma$. Then
\[
s' = s +2\gamma \, Ric( \mu , \mu ) +\gamma | \nabla(\mu) |^2 + 2 \gamma^2 | \nabla_{\mu^{\#}}(\mu) |^2
\]
where $Ric$ and $\nabla$ denote the Ricci curvature and Levi-Civita connection of $g_\Sigma$, $\mu^{\#} = (g_\Sigma)^{-1}(\mu)$, $\gamma$ is given by
\[
\gamma = \frac{ t^2-t}{1+ (t-t^2)|\mu|^2}
\]
and where $| \; . \; |$ denotes taking the norm of various tensors with respect to $g_\Sigma$.
\end{lemma}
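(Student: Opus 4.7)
The plan is to exploit the harmonicity of $\mu$ (in particular that $\mu$ is closed) to recognise $g'$ as a local Riemannian product, thereby reducing the calculation to that of the scalar curvature of a rank-one deformation of $g_\Sigma$ on the $2$-surface $\Sigma$ itself.

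First, I would complete the square:
\[
g' \;=\; (dx + t\mu)^2 + g_\Sigma + (t-t^2)\,\mu\otimes\mu.
\]
Since $\mu$ is harmonic it is in particular closed, so on any simply-connected open $U\subset\Sigma$ we may write $\mu=df$ for some function $f$ with $\Delta_{g_\Sigma}f=0$. Then $dx+t\mu=d(x+tf)$, and the diffeomorphism $(x,y)\mapsto (\tilde x,y)=(x+tf(y),y)$ of $\mathbb{R}\times U$ brings $g'$ into the form
\[
g' \;=\; d\tilde x^2 + h, \qquad h := g_\Sigma + (t-t^2)\,\mu\otimes\mu.
\]
This exhibits $(\mathbb{R}\times\Sigma,g')$ locally as a Riemannian product $\mathbb{R}\times (\Sigma,h)$, so scalar curvature being a local invariant one obtains $s'=s_h$, the scalar curvature of the $2$-metric $h$ on $\Sigma$.

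Now $h$ is a rank-one perturbation of $g_\Sigma$, so by Sherman--Morrison $h^{ij} = g^{ij}-\rho\,\mu^{\#i}\mu^{\#j}$ with $\rho = (t-t^2)/(1+(t-t^2)|\mu|^2)=-\gamma$. Writing $H_{ij}:=\nabla_i\mu_j$ (symmetric because $\mu$ is closed), the difference of Christoffel symbols simplifies dramatically: the antisymmetric pieces cancel and one is left with $D^k_{ij}:=\tilde\Gamma^k_{ij}-\Gamma^k_{ij}=\rho\,\mu^{\#k}H_{ij}$. Substituting this into the standard identity
\[
\tilde R_{ij}-R_{ij} \;=\; \nabla_k D^k_{ij}-\nabla_i D^k_{kj} + D^k_{km}D^m_{ij} - D^k_{im}D^m_{kj}
\]
and tracing with $h^{ij}$ gives $s_h$. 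Along the way one uses the harmonicity of $\mu$ in three places: $\operatorname{tr}_g H=-\delta\mu=0$ to kill several divergence terms; $\nabla\rho = -2\rho^2 K^\flat$ where $K:=\nabla_{\mu^\#}\mu$ (which follows from $\nabla|\mu|^2=2K^\flat$); and the Weitzenb\"ock identity $\nabla^jH_{jk}=R^l{}_k\mu_l$, which is what introduces the Ricci term. Finally, two-dimensionality forces $H$ to be symmetric and traceless, yielding the pointwise identity $|\nabla_{\mu^\#}\mu|^2=\tfrac12|\mu|^2|\nabla\mu|^2$, which is the mechanism that permits the result to be expressed purely in terms of $|\nabla\mu|^2$ and $|\nabla_{\mu^\#}\mu|^2$ as stated.

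The main obstacle is the bookkeeping in the last step: once the four summands above are each computed (using the further traces $\operatorname{tr}_hH=-\rho H(\mu^\#,\mu^\#)$, $h^{ij}K_iK_j=|K|^2-\rho H(\mu^\#,\mu^\#)^2$, and the identity $\mu^{\#a}\mu^{\#b}\mu^{\#c}\nabla_a H_{bc}=\nabla_{\mu^\#}H(\mu^\#,\mu^\#)-2|K|^2$ obtained by a Leibniz expansion), one must carefully collect the $\rho^i$ coefficients and use $1-\rho|\mu|^2=\phi^{-2}$ together with the $2$-dimensional identity for $|\nabla_{\mu^\#}\mu|^2$ to recognise the resulting expression as $s+\gamma\,\mathrm{Ric}(\mu,\mu)-\gamma|\nabla\mu|^2-2\gamma^2|\nabla_{\mu^\#}\mu|^2$. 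A cleaner alternative is to pass to the isothermal coordinates $(u,v)=(f,\tilde f)$, where $\tilde f$ is a local harmonic conjugate of $f$: in these coordinates $g_\Sigma=|\mu|^{-2}(du^2+dv^2)$ and $h$ is diagonal, so the Brioschi formula for the Gaussian curvature of an orthogonal $2$-metric computes $K_h$ directly, after which $s_h=2K_h$ can be re-expressed in invariant form using $\Delta_{g_\Sigma}|\mu|^2=2|\nabla\mu|^2+2\,\mathrm{Ric}(\mu,\mu)$ (harmonic Bochner) and $|\nabla|\mu|^2|_g^2=4|\nabla_{\mu^\#}\mu|^2$.
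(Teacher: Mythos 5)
Your reduction is correct, and it is a genuinely different route from the paper's. The paper computes the Christoffel symbols and curvature components of the three-dimensional metric $g'$ head-on; you instead complete the square, $g'=(dx+t\mu)\otimes(dx+t\mu)+g_\Sigma+(t-t^2)\mu\otimes\mu$, use closedness of $\mu$ to write $dx+t\mu=d(x+tf)$ on a simply connected patch, and so exhibit $g'$ as a local Riemannian product $d\tilde x^{\,2}+h$ with $h=g_\Sigma+(t-t^2)\mu\otimes\mu$; hence $s'=s_h$ and everything becomes a two-dimensional computation. This makes manifest what is only implicit in the paper (all primed Christoffel and curvature components carrying a $0$ index vanish). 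Your supporting identities check out: $h^{ij}=g^{ij}+\gamma\mu^i\mu^j$, the difference tensor is $D^k_{ij}=\rho\,\mu^k\nabla_i\mu_j$ with $\rho=-\gamma$, $\nabla\rho=-2\rho^2K^\flat$, and the Bochner/Weitzenb\"ock facts are the standard ones for a harmonic $1$-form. (Minor caveat: the isothermal coordinates $(f,\tilde f)$ degenerate at zeros of $\mu$, which exist once the genus exceeds one, so that variant works only where $\mu\neq0$ and one extends by continuity.)

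The genuine gap is the step you leave as ``bookkeeping'': collecting the traced terms and ``recognising'' the stated right-hand side. Carried out correctly, your plan does not land on the displayed identity: tracing $\tilde R_{ij}-R_{ij}$ with $h^{ij}$ (the $\nabla\nabla\mu$ terms turning into curvature via the Ricci identity) gives
\[
s' \;=\; s+2\gamma\,Ric(\mu,\mu)+\gamma\,|\nabla\mu|^2+2\gamma^2|\nabla_{\mu^{\#}}\mu|^2\;=\;\bigl(1+\gamma|\mu|^2\bigr)\bigl(s+\gamma|\nabla\mu|^2\bigr),
\]
the last equality holding in dimension two, which differs from the statement in the Ricci coefficient and in the signs of both derivative terms. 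Two quick cross-checks: (a) linearising at $c:=t-t^2=0$, the standard formula $\delta s=-\langle\delta g,Ric\rangle+\operatorname{div}\operatorname{div}\delta g-\Delta\operatorname{tr}\delta g$ applied to $\delta g=\mu\otimes\mu$ with $\mu$ harmonic gives $-2\,Ric(\mu,\mu)-|\nabla\mu|^2$, the linearisation of the formula above, not of the stated one (which linearises to $-Ric(\mu,\mu)+|\nabla\mu|^2$); (b) for $g_\Sigma=e^{2u}(du^2+dv^2)$ (flat) and $\mu=du$ (harmonic for any conformal factor in dimension two), $h=(e^{2u}+c)\,du^2+e^{2u}\,dv^2$ has Gaussian curvature $-c/(e^{2u}+c)^2<0$, so $s'<0$, while the stated right-hand side equals $-\gamma(1+\gamma|\mu|^2)|\nabla\mu|^2>0$ for $t\in(0,1)$. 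So pushing your computation through forces a corrected version of the lemma rather than the lemma as printed; asserting that the bookkeeping ``recognises'' the printed right-hand side is exactly where the proposal fails. Relatedly, invoking the two-dimensional identity $|\nabla_{\mu^\#}\mu|^2=\tfrac12|\mu|^2|\nabla\mu|^2$ as the mechanism producing the stated shape is a warning sign: in the trace the potentially troublesome $\bigl(\nabla\mu(\mu^\#,\mu^\#)\bigr)^2$ terms cancel in any dimension, so no such identity is needed. Note finally that the corrected identity still yields, for genus at least one (so $s\le0$), the bound $-s_t\le -s+|\gamma|\,|\nabla\mu|^2$, which after the rescaling $g_\Sigma=\lambda g_{\Sigma,0}$ with $\lambda\to\infty$ suffices for the adjunction argument this lemma feeds into.
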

\begin{proof}
The proof is a direct computation. We use local coordinates $(x^1,x^2)$ on $\Sigma$. Setting $x^0 = x$, we have local coordinates $(x^0,x^1,x^2)$ on $\mathbb{R} \times \Sigma$. We will use index notation along with the summation convention. Indices $a,b,c, \dots $ will run from $0$ to $2$ while indices $i,j,k,\dots$ will run from $1$ to $2$ only. For instance we may write
\[
g_\Sigma = g_{ij}dx^i dx^j \text{ and } g' = g'_{ab} dx^a dx^b.
\]
Then $g'$ is given in components by
\[
g'_{00} = 1, \quad g'_{0i} = t\mu_i, \quad g'_{ij} = g_{ij} + t\mu_i \mu_j.
\]
One then finds
\[
{g'}^{00} = \alpha, \quad {g'}^{0i} = \beta \mu^i, \quad {g'}^{ij} = g^{ij} + \gamma \mu^i \mu^j,
\]
where $\mu^i = g^{ij}\mu_j$ and
\[
\alpha = \frac{1+t|\mu|^2}{1+(t-t^2)|\mu|^2}, \quad \beta = \frac{-t}{1+(t-t^2)|\mu|^2}, \quad \gamma = \frac{t^2-t}{1+(t-t^2)|\mu|^2}.
\]
Next, we compute the Christoffel symbols of $g'$ in terms of those of $g_\Sigma$. Let $\Gamma_{kij}$ and $\Gamma'_{cab}$ be given by
\[
2\Gamma_{kij} = \partial_i g_{jk} + \partial_j g_{ik} - \partial_k g_{ij}, \quad 2\Gamma'_{cab} = \partial_a g'_{bc} + \partial_b g'_{ac} - \partial_c g'_{ab}.
\]
Let $\mu_{ij} = \partial_i \mu_j$. Note that $\mu$ is a closed $1$-form since it is harmonic and thus $\mu_{ij} = \mu_{ji}$. A short calculation gives
\[
\Gamma'_{kij} = \Gamma_{kij} + t\mu_{ij}\mu_k, \quad \Gamma'_{0ij} = t\mu_{ij}
\]
and $\Gamma'_{cab}=0$ whenver $a=0$ or $b=0$. Raising the first index gives:
\begin{equation*}
\begin{aligned}
{{\Gamma'}^{0}}_{ij} &= {g'}^{00} \Gamma'_{0ij} + {g'}^{0k} \Gamma'_{kij} \\
&= \alpha t \mu_{ij} + \beta \mu^k ( \Gamma_{kij} + t \mu_{ij} \mu_k) \\
&= (\alpha t + t\beta |\mu|^2) \mu_{ij} + \beta \mu^k \Gamma_{kij} \\
&= -\beta( \mu_{ij} - {\Gamma^k}_{ij}\mu_k) \\
&= -\beta( \partial_i \mu_j - {\Gamma^k}_{ij} \mu_k ) \\
&= -\beta \nabla_i \mu_j
\end{aligned}
\end{equation*}
and
\begin{equation*}
\begin{aligned}
{{\Gamma'}^{k}}_{ij} &= {g'}^{k0} \Gamma'_{0ij} + {g'}^{kl} \Gamma'_{lij} \\
&= \beta \mu^k t \mu_{ij} + (g^{kl} + \gamma \mu^k \mu^l)( \Gamma_{lij} + t\mu_{ij} \mu_l) \\
&= \beta\mu^k t \mu_{ij} + {\Gamma^k}_{ij} + t \mu^k \mu_{ij} + \gamma \mu^k \mu^l \Gamma_{lij} + t \gamma |\mu|^2 \mu^k \mu_{ij} \\
&= {\Gamma^k}_{ij} + (t \beta + t + t \gamma |\mu|^2) \mu^k \mu_{ij} + \gamma \mu^k \mu^l \Gamma_{lij} \\
&= {\Gamma^k}_{ij} - \gamma (\mu_{ij} - {\Gamma^l}_{ij} \mu_l) \mu^k \\
&= {\Gamma^k}_{ij} - \gamma (\nabla_i \mu_j) \mu^k.
\end{aligned}
\end{equation*}
Therefore, if $\nabla$ denotes the Levi-Civita connection of $g_\Sigma$ and $\nabla'$ the Levi-Civita connection of $g'$, we have shown that
\[
\nabla'_0( \partial_i) = \nabla'_i( \partial_0) = \nabla'_0(\partial_0) = 0
\]
and
\[
\nabla'_i(\partial_j) = \nabla_i(\partial_j) + \nabla_i(\mu_j) \theta
\]
where
\[
\theta = -\beta \partial_0 - \gamma \mu^{\#}, \quad \mu^{\#} = \mu^j \partial_j.
\]

This can alternatively be written as
\[
\nabla'_i ( \partial_j ) = \nabla_i \partial_j + i_{\partial_j} (\nabla_i \mu ) \theta
\]
where $i_{\partial_j}$ denotes contraction by $\partial_j$.

Let ${R_{ijk}}^l$ and ${R'_{ijk}}^{l}$ be the Riemann curvatures of $g_\Sigma$ and $g'$, so
\[
\nabla_i \nabla_j \partial_k - \nabla_j \nabla_i \partial_k = {R_{ijk}}^{l} \partial_l, \quad \nabla'_a \nabla'_b \partial_c - \nabla'_b \nabla'_a \partial_c = {R'_{abc}}^{d} \partial_d.
\]
It follows that ${R'_{abc}}^{d} = 0$ whenever $a,b$ or $c$ are zero, so the only curvature terms for $g'$ are of the form ${R'_{ijk}}^{0}$ or ${R'_{ijk}}^{l}$. Moreover only the ${R'_{ijk}}^{l}$ terms contribute to the Ricci curvature, so we only need to compute these. We have:

\begin{align*}
\nabla'_i \nabla'_j \partial_k &= \nabla'_i ( \nabla_j \partial_k + (\nabla_j \mu_k) \theta ) \\
&= \nabla_i \nabla_j \partial_k + \iota_{ (\nabla_j \partial_k )} (\nabla_i \mu ) \theta + \partial_i( \nabla_j \mu_k ) \theta + (\nabla_j \mu_k) \nabla'_i (\theta).
\end{align*}
We also have
\begin{align*}
\nabla'_i(\theta) &= \nabla'_i( -\beta \partial_0 - \gamma \mu^{\#} ) = - \partial_i (\beta) \partial_0 - \gamma_i \mu^{\#} - \gamma \nabla_i ( \mu^{\#} ) - \gamma \iota_{\mu^{\#}}(\nabla_i \mu) \theta \\
&= \nabla'_i( -\beta \partial_0 - \gamma \mu^{\#} ) = - \partial_i (\beta) \partial_0 - \gamma_i \mu^{\#} - \gamma \nabla_i ( \mu^{\#} ) - \gamma \mu^m (\nabla_i \mu_m) \theta
\end{align*}
where $\gamma_i = \partial_i \gamma$. Therefore
\begin{align*}
{R'_{ijk}}^{l} &= {R_{ijk}}^{l} - \iota_{ (\nabla_j \partial_k )} (\nabla_i \mu ) \gamma \mu^l - \partial_i( \nabla_j \mu_k ) \gamma \mu^l - (\nabla_j \mu_k)(\gamma_i \mu^l + \gamma \nabla_i\mu^l) + \gamma^2 (\nabla_j \mu_k) \mu^m (\nabla_i \mu_m) \mu^l \\
& \quad \quad  +\iota_{ (\nabla_i \partial_k )} (\nabla_j \mu ) \gamma \mu^l + \partial_j( \nabla_i \mu_k ) \gamma \mu^l + (\nabla_i \mu_k)(\gamma_j \mu^l + \gamma \nabla_j\mu^l) -  \gamma^2 (\nabla_i \mu_k) \mu^m (\nabla_j \mu_m) \mu^l \\
&= {R_{ijk}}^{l} - {\Gamma^m}_{jk} (\nabla_i \mu_m ) \gamma \mu^l - \partial_i( \nabla_j \mu_k ) \gamma \mu^l - (\nabla_j \mu_k)(\gamma_i \mu^l + \gamma \nabla_i\mu^l) + \gamma^2 (\nabla_j \mu_k) \mu^m (\nabla_i \mu_m) \mu^l \\
& \quad \quad  + {\Gamma^m}_{ik} (\nabla_j \mu_m ) \gamma \mu^l + \partial_j( \nabla_i \mu_k ) \gamma \mu^l + (\nabla_i \mu_k)(\gamma_j \mu^l + \gamma \nabla_j\mu^l) - \gamma^2 (\nabla_i \mu_k) \mu^m (\nabla_j \mu_m) \mu^l  \\
& = {R_{ijk}}^{l} + ((\nabla_j \nabla_i \mu_k) - (\nabla_i \nabla_j \mu_k )) \gamma \mu^l - (\nabla_j \mu_k)(\gamma_i \mu^l + \gamma \nabla_i\mu^l) + \gamma^2 (\nabla_j \mu_k) \mu^m (\nabla_i \mu_m) \mu^l  \\
& \quad \quad + (\nabla_i \mu_k)(\gamma_j \mu^l + \gamma \nabla_j\mu^l) + \gamma^2 (\nabla_i \mu_k) \mu^m (\nabla_j \mu_m) \mu^l.
\end{align*}

Then since $\nabla_i \nabla_j \mu_k - \nabla_j \nabla_i \mu_k = - {R_{ijk}}^{l} \mu_l$, we have

\begin{equation*}
\begin{aligned}
{R'_{ijk}}^{l} &= {R_{ijk}}^{l} + \nabla_j \mu_k( -\gamma_i \mu^l - \gamma \nabla_i (\mu^l) + \gamma^2 (\nabla_i(\mu_m)\mu^m)\mu^l) \\
& \; \; \; \; - \nabla_i(\mu_k)( -\gamma_j \mu^l - \gamma \nabla_j(\mu^l) + \gamma^2 (\nabla_j(\mu_m)\mu^m)\mu^l ) + \gamma R_{ijkm} \mu^m \mu^l.
\end{aligned}
\end{equation*}
Let $R_{ij} = {R_{kij}}^{k}$ and $R'_{ab} = {R'_{cab}}^{c}$ be the Ricci curvatures of $g_\Sigma$ and $g'$. As noted above, the only non-zero terms in $R'_{ab}$ are given by ${R'}_{ij} = {R'_{kij}}^{k}$. A direct computation gives
\begin{equation*}
\begin{aligned}
R'_{ij} &= R_{ij} - \nabla_k \mu_j ( -\gamma_i \mu^k - \gamma \nabla_i \mu^k + \gamma^2 \nabla_i (\mu_m)\mu^m \mu^k) \\
& \; \; \; \; + \nabla_i \mu_j ( -\gamma_k \mu^k - \gamma \nabla_k \mu^k + \gamma ^2 \nabla_k (\mu_m) \mu^m \mu^k ) + \gamma R_{rijs} \mu^r \mu^s \\
& = R_{ij} + \gamma_i \mu^k \nabla_k \mu_j + \gamma \nabla_i( \mu^k )\nabla_j(\mu_k) - \gamma^2 (\nabla_i(\mu_m)\mu^m)(\nabla_j(\mu_n) \mu^n) \\
& \; \; \; \; - (\nabla_i (\mu_j))(\gamma_k \mu^k) - \gamma (\nabla_k(\mu^k))\nabla_i \mu_j + \gamma^2 (\nabla_i (\mu)_j) \nabla_k(\mu_m)\mu^m \mu^k + \gamma R_{rijs} \mu^r \mu^s.
\end{aligned}
\end{equation*}
But note that $\gamma = (t^2-t)/(1+ (t-t^2)|\mu|^2)$, so
\[
\gamma_i = \frac{-(t^2-t)}{(1 + (t-t^2)|\mu|^2)^2} (t-t^2) \partial_i( |\mu|^2 ) = \gamma^2 \partial_i( |\mu|^2)
\]
and
\[
\partial_i( |\mu|^2) = \partial_i g_\Sigma(\mu,\mu) = 2 \nabla_i(\mu_k) \mu^k.
\]
So
\[
-\gamma_i \mu^k \nabla_k(\mu_j) = -2\gamma^2 (\nabla_i(\mu_m) \mu^m)(\nabla_j(\mu_n)\mu^n)
\]
and
\[
\gamma_k \mu^k = 2 \gamma^2 \nabla_k(\mu_m)\mu^k \mu^m.
\]
Substituting these into the above expression for $R'_{ij}$, we get
\begin{equation*}
\begin{aligned}
R'_{ij} &= R_{ij} + \gamma \nabla_i( \mu^k) \nabla_j( \mu_k) + \gamma^2 (\nabla_i(\mu_m) \mu^m)(\nabla_j(\mu_n)\mu^n) \\
& \; \; \; \; - \gamma^2 (\nabla_i(\mu)_j) ( \nabla_k(\mu)_m \mu^m \mu^k) - \gamma( \nabla_k(\mu^k)) \nabla_i(\mu_j) + \gamma R_{rijs} \mu^r \mu^s.
\end{aligned}
\end{equation*}
Moreover, we assume that $\mu$ is harmonic so $\nabla_k(\mu^k) = 0$ and the second to last term vanishes. Now let $s = g^{ij}R_{ij}$, $s' = {g'}^{ij} R'_{ij}$ be the scalar curvatures. Then contracting the above expression for $R'_{ij}$ and using the assumption that $\mu$ is harmonic gives:
\begin{equation*}
\begin{aligned}
s' &= (g^{ij} + \gamma \mu^i \mu^j)( R_{ij} + \gamma \nabla_i( \mu^k) \nabla_j( \mu_k) + \gamma^2 (\nabla_i(\mu_m) \mu^m)(\nabla_j(\mu_n)\mu^n)) \\
& \; \; \; \; \; \; \; \; - (g^{ij} + \gamma \mu^i \mu^j)(\gamma^2 (\nabla_i(\mu)_j) ( \nabla_k(\mu)_m \mu^m \mu^k) ) + (g^{ij} + \gamma \mu^i \mu^j)\gamma R_{rijs} \mu^r \mu^s) \\
&= s + \gamma R_{ij} \mu^i \mu^j + \gamma | \nabla(\mu) |^2 + 2 \gamma^2 | \nabla_{\mu^{\#}}(\mu)|^2 + \gamma R_{rijs} g^{ij} \mu^r \mu^s + \gamma R_{rijs} \mu^i \mu^j \mu^r \mu^s \\
&= s + 2\gamma R_{ij} \mu^i \mu^j + \gamma | \nabla(\mu) |^2 + 2 \gamma^2 | \nabla_{\mu^{\#}}(\mu)|^2.
\end{aligned}
\end{equation*}

\end{proof}

Let $X$ be a compact, oriented, smooth $4$-manifold with $b^+(X) \ge 3$ and $f : X \to X$ an orientation preserving diffeomorphism. Assume further that $b^+(X) \ge 3$. Recall that according to Definition \ref{def:basic} we say that a cohomology class $c \in H^2(X ; \mathbb{Z})$ is {\em $f$-basic} if $c = c_1(\mathfrak{s})$ for a spin$^c$-structure $\mathfrak{s}$ on $X$ which is preserved by $f$, satisfies $d(X , \mathfrak{s}) = -1$ and such that either $SW^{\mathbb{Z}_2}(X , f , \mathfrak{s})$ is non-zero or $SW^{\mathbb{Z}}(X , f , \mathfrak{s})$ is defined and is non-zero.

\begin{proof}[{\bf Proof of Theorem \ref{thm:fadjunction}:}] Let $j : \Sigma \to X$ be an embedded surface of genus $g$ and suppose there exists an isotopy between the embeddings given by $j$ and $f \circ j$. By the isotopy extension theorem \cite[Theorem B]{pal}, there exists a diffeomorphism $g : X \to X$ isotopic to the identity such that $f \circ j = g \circ j$, hence $(g^{-1} \circ f) \circ j = j$. Note that a class $c \in H^2(X ; \mathbb{Z})$ is $f$-basic if and only if it is $g^{-1} \circ f$ basic, since the Seiberg-Witten invariants of a diffeomorphism depend only on the isotopy class. Replacing $f$ by $g^{-1} \circ f$, we may as well assume that $f$ preserves $\Sigma$ (i.e. $f \circ j = j$). Let $N_\Sigma$ denote the normal bundle of $\Sigma$. Then $N_\Sigma$ is an $SO(2)$-bundle  with Euler class $[\Sigma]^2$. Identify a tubular neighbourhood of $\Sigma$ with a neighbourhood of the zero section in $N_\Sigma$. The action of $f$ in such a neighbourhood of $\Sigma$ is then given by the induced action $f_* : N_\Sigma \to N_\Sigma$ of the derivative of $f$ along the normal bundle. Since the identity component of $GL(2,\mathbb{R})$ deformation retracts to $SO(2)$, we can assume after performing an isotopy that $f_*$ is valued in $SO(2)$. In other words, there is a smooth function $\varphi : \Sigma \to SO(2)$ such that the action of $f$ in a suitable tubular neighbourhood of $\Sigma$ has the form
\[
N_\Sigma \to N_\Sigma \quad w \mapsto \varphi( \pi(w)) w
\]
where $\pi : N_\Sigma \to \Sigma$ is the projection. Note that by applying a further isotopy to $f$, we are free to deform $\varphi$ by a smooth homotopy. Let $g_{\Sigma,0}$ be a constant scalar curvature metric on $\Sigma$. Then after applying to a homotopy to $\varphi$ and extending this to a corresponding isotopy of $f$, we can assume that $\varphi : \Sigma \to S^1$ is harmonic with respect to the metric $g_{\Sigma,0}$ (i.e. $\varphi^{-1} d\varphi$ is a harmonic $1$-form on $\Sigma$ with respect to $g_{\Sigma,0}$).

Assume first that $[\Sigma]^2 = 0$ and $g \ge 1$. Let $E_f \to S^1$ be the mapping torus of $f$ which we interpret as a family of $4$-manifolds parametrised by $S^1 = [0,1]/\{0 \sim 1\}$. So $E_f$ is the quotient of $[0,1] \times X$ by the relation $(1,x) \sim (0,f(x))$. A fibrewise metric for $E_f$ may be obtained from a smooth path $\{ g_t \}_{t \in [0,1]}$ of metrics on $X$ such that $g_1 = f^*(g_0)$ and such that the path $g_t$ is constant near $t=0,1$. Since $[\Sigma]^2 = 0$, the normal bundle is trivial and we may write $N_\Sigma = \Sigma \times \mathbb{R}^2$. The action of $f$ in a tubular neighbourhood of $\Sigma$ has the form $f( w , z) = (w , \varphi(w)z)$ for some $\varphi : \Sigma \to SO(2)$. Let $\epsilon_1 > \epsilon_2 > 0$ be chosen so that the annulus $Y_{\epsilon_1,\epsilon_2} = \{ (w,z) | \epsilon_1 > |z| > \epsilon_2 \}$ is contained in the tubular neighbourhood. Then $Y_{\epsilon_1,\epsilon_2}$ is diffeomorphic to the product $(\epsilon_1 , \epsilon_2) \times S^1 \times \Sigma = (\epsilon_1 , \epsilon_2) \times Y$ where $Y = S^1 \times \Sigma$. We define a metric $g_{Y_{\epsilon_1,\epsilon_2}}$ on $Y_{\epsilon_1 , \epsilon_2}$ by taking the product of the standard metric on $(\epsilon_1 , \epsilon_2)$ induced from $\mathbb{R}$ with some metric $g_{Y,0}$ on $Y$. Choose an extension of $g_{Y_{\epsilon_1,\epsilon_2}}$ to a metric on all of $X$ and denote this metric as $g_0$. Let $g_1 = f^*(g_0)$ and for $t \in [0,1]$ define $g_t = tg_1 + (1-t)g_0$. So $\{ g_t \}_{t \in [0,1]}$ defines a family of metrics for the family $E_f$.

Choose some $\epsilon_3 \in (\epsilon_1 , \epsilon_2)$ and identify $Y$ with the subset $\{ (w,z) \; | \; |z| = \epsilon_3 \}$ of $Y_{\epsilon_1,\epsilon_2}$. Then $f$ sends $Y$ to itself and we let $f_Y : Y \to Y$ denote the restriction of $f$ to $Y$. Note that the restriction of $g_0$ to $Y$ is $g_{Y,0}$. More generally, let $g_{Y,t}$ denote the restriction of $g_t$ to $Y$. Note that $g_{Y,t} = t g_{Y,1} + (1-t)g_{Y,0}$, where $g_{Y,1} = f_Y^*(g_{Y,0})$.

Since the normal bundle of $\Sigma$ is trivial, we have that $Y$ separates $X$ into to components $X_+$ and $X_-$ with common boundary $Y$ (the interior of $X_-$ is a tubular neighbourhood of $\Sigma$ and $X_+$ is $X$ minus the interior of $X_-$). Moreover the diffeomorphism $f : X \to X$ respects the decomposition $X = X_+ \cup_Y X_-$. By construction our family of metrics $\{ g_t \}$ when restricted to a collar neighbourhood $(-\epsilon , \epsilon)  \times Y$ of $Y$ is the product of the standard metric on $(-\epsilon , \epsilon)$ induced from $\mathbb{R}$ with the family of metrics $\{ g_{Y,t} \}$ on $Y$.

For any $R>0$ let $X_R$ be the $4$-manifold obtained from $X$ by cutting along $Y$ and inserting a cylinder $[-R , R] \times Y$ with the family of product metrics (namely the standary metric on $[-R,R]$ times the family $\{ g_{Y,t} \}$ of metrics on $Y$). This defines a family $\{ g_t(R) \}_{t \in [0,1]}$ of metrics on $X_R$ for each $R$, moreover $g_1(R) = f_R^*(g_0(R))$, where $f_R : X_R \to X_R$ is the diffeomophism obtained from $f$ by acting as $id \times f_Y$ on the cylinder $[-R,R] \times Y$. Clearly each $X_R$ can be identified with $X$ in such a way that each $f_R$ is isotopic to $f$, hence has the same Seiberg-Witten invariants.

Let $c \in H^2(X ; \mathbb{Z})$ be an $f$-basic class. So there exists a spin$^c$-structure $\mathfrak{s}$ preserved by $f$ with $c = c_1(\mathfrak{s})$ and such that either the mod $2$ Seiberg-Witten invariant of $(X , f , \mathfrak{s})$ is non-zero, or $sgn_+(f)=1$ and the integer Seiberg-Witten invariant of $(X,f,\mathfrak{s})$ is non-zero. In either case this means that the families moduli space $\mathcal{M}(X,f, \mathfrak{s}, \{g_t\} , \{\eta_t\} )$ is non-empty for a generic families perturbation $\{ \eta_t \}_{t \in [0,1]}$. By definition $\eta_t$ is a smooth path of $g_t$-self-dual $2$-forms such that $\eta_1 = f^*(\eta_0)$. Non-emptyness of the moduli space means that for some $t \in [0,1]$ there exists an irreducible solution of the Seiberg-Witten equations for $(X , \mathfrak{s} , g_{t} , \eta_{t})$. We claim this this also implies the existence of a (possibly reducible) solution to the Seiberg-Witten equations for $(X , \mathfrak{s} , g_{t})$ for some $t \in [0,1]$ and with the zero perturbation. To see this, let $H^+_{g_t}$ denote the space of $g_t$-self-dual harmonic $2$-forms and let $w_t \in H^+_{g_t}$ be defined as the $L^2$ orthogonal projection of $c \in H^2(X ; \mathbb{Z})$ to $H^+_{g_t}$, with respect to the metric $g_t$. We note that the $\eta$-perturbed Seiberg-Witten equations in the metric $g_t$ admit a reducible solution if and only if the $L^2$ orthogonal projection of $\eta$ to $H^+_{g_t}$ equals $w_t$. Therefore if $w_t \neq 0$ for all $t \in [0,1]$, then for all families perturbations $\{ \eta_t \}$ in a sufficiently small neighbourhood of the zero, there are no reducibles. Then by taking a sequence of generic families perturbations $\{ \eta_t^{(n)} \}$ converging to zero (in the $\mathcal{C}^\infty$-topology) and using the compactness properties of the Seiberg-Witten equations, we deduce that there exists a solution of the Seiberg-Witten equations for $(X , \mathfrak{s} , g_{t})$ for some $t \in  [0,1]$ with zero perturbation. On the other hand if $w_t = 0$ for some $t_0$, then there exists a (reducible) solution of the Seiberg-Witten equations for $(X , \mathfrak{s} , g_{t_0} )$ with zero perturbation. Hence the claim follows in either case.

The above argument can likewise be applied to the family of metrics $\{ g_t(R) \}_{t \in [0,1]}$ for any $R>0$. Hence for each $R>0$ there exists a $t_R \in [0,1]$ such that $(X_R , f_R , \mathfrak{s} , g_{t_R}(R) )$ admits a solution of the unperturbed Seiberg-Witten equations. But note that the restriction of $g_{t_R}$ to the collar $[-R , R] \times Y$ is the product of the standard metric on $[-R,R]$ with $g_{Y,t_R}$. By compactness of $[0,1]$, we can find a sequence $R_n$ of positive real numbers such that $R_n \to \infty$ as $n \to \infty$ and $t_{R_n} \to t_\infty $ as $n \to \infty$ for some $t_\infty \in [0,1]$. This implies that the sequence of metrics $g_{Y , t_{R_n}}$ converges in the $\mathcal{C}^\infty$-topology to $g_{Y , t_\infty}$.

The argument used in the proof of \cite[Proposition 8]{km} (or alternatively \cite[Section 2.4.2]{nic}) can be adapted to the case where there is a sequence of metrics converging in the $\mathcal{C}^\infty$-topology. It follows that there exists a solution of the Seiberg-Witten equations on $\mathbb{R} \times Y$ with respect to the product ${g}_{\mathbb{R}} \times g_{Y , t_\infty}$ of the standard metric on $\mathbb{R}$ with $g_{Y , t_\infty}$ which is translation invariant in the temporal gauge, and where the spin$^c$-structure is obtained from $\mathfrak{s}$ in the obvious way (restrict $\mathfrak{s}$ to a collar neighbourhood $(-\epsilon , \epsilon) \times Y$ of $Y$ and then extend to $\mathbb{R} \times Y$ by homotopy equivalence).

Recall that $Y = S^1 \times \Sigma$. We now take $g_{Y,0} = g_{S^1} \times g_{\Sigma}$ to be the product of a unit length metric $g_{S^1}$ on $S^1$ with $g_{\Sigma} = \lambda g_{\Sigma,0}$ on $\Sigma$, where $g_{\Sigma,0}$ is a fixed constant scalar curvature metric on $\Sigma$ and $\lambda$ is a positive constant which for the moment is left undetermined. Note that $g_{\Sigma}$ is a constant scalar curvature metric and that $\varphi$ is harmonic with respect to $g_\Sigma$ for any $\lambda$. 

Recall that $g_{Y,t} = t g_{Y,1} + (1-t)g_{Y,0}$ where $g_{Y,1} = f_Y^*(g_{Y,0})$. To proceed further we need to examine the scalar curvature of the metrics $g_{Y,t}$. For this purpose we can view $S^1$ as the quotient of $\mathbb{R}$ with its standard metric by the action of $\mathbb{Z}$ by integer translations. Then we may as well compute the scalar curvature of the pullback metric on $\mathbb{R} \times \Sigma$. Then $f_Y : Y \to Y$ is given by 
\[
f_Y( x , w ) = (x + \varphi(w) , w )
\]
where we view $\varphi : \Sigma \to S^1$ as an $\mathbb{R}/\mathbb{Z}$-valued function such that $\mu = d \varphi$ is a harmonic $1$-form on $\Sigma$ with integral periods. The metric $g_{Y,0}$ lifts to $dx^2 + g_\Sigma$ on $\mathbb{R} \times \Sigma$. It follows that 
\[
g_{Y,1} = f_Y^*(g_{Y,0}) = f_Y^*( dx^2 + g_\Sigma) = (dx + \mu)^2 + g_\Sigma
\]
and hence
\[
g_{Y,t} = g_{Y,0} + t (dx \otimes \mu + \mu \otimes dx + \mu^2).
\]
Let $s$ denote the (constant) scalar curvature of $g_\Sigma$ and $s_t$ the scalar curvature of $g_{Y,t}$. Then from Lemma \ref{lem:scalcurv}, we have
\[
s_t = s +2\gamma \, Ric( \mu , \mu ) +\gamma | \nabla(\mu) |^2 + 2 \gamma^2 | \nabla_{\mu^{\#}}(\mu) |^2
\]
where $Ric$ and $\nabla$ denote the Ricci curvature and Levi-Civita connection of $g_\Sigma$, $\mu^{\#} = (g_\Sigma)^{-1}(\mu)$, $\gamma$ is given by
\[
\gamma = \frac{ t^2-t}{1+ (t-t^2)|\mu|^2}
\]
and where $| \; . \; |$ denotes taking the norm of various tensors induced by $g_\Sigma$. Since $\Sigma$ is $2$-dimensional, we have $Ric = \frac{s}{2} g_\Sigma$ and so
\begin{equation}\label{equ:st1}
\begin{aligned}
s_t &= s +\gamma s |\mu|^2 + \gamma | \nabla(\mu) |^2 + 2 \gamma^2 | \nabla_{\mu^{\#}}(\mu) |^2.
\end{aligned}
\end{equation}
The Gauss-Bonnet theorem gives
\[
s = \frac{8\pi(1-g)}{vol(\Sigma)}
\]
where $vol(\Sigma)$ denotes the volume of $\Sigma$ in the metric $g_\Sigma$. Note that $s \le 0$ as $g \ge 1$. We also see that $\gamma \le 0$ from the definition of $\gamma$ and so $s\gamma \ge 0$. Thus from Equation (\ref{equ:st1}), we obtain
\begin{equation*}
s_t \ge s + \gamma | \nabla(\mu)|^2.
\end{equation*}
Furthermore, one sees that $\gamma \ge -1$ and so
\[
s_t \ge s - | \nabla(\mu)|^2.
\]
Next, we observe that since $g_\Sigma = \lambda g_{\Sigma , 0}$, we have that $vol(\Sigma)$ scales as $\lambda$, $s$ scales as $\lambda^{-1}$ and $|\nabla(\mu)|^2 = g^{ij}g^{kl}\nabla_i(\mu_k)\nabla_j(\mu_l)$ scales as $\lambda^{-2}$ (because $\mu$ and $\nabla$ do not depend on $\lambda$, but $g^{ij}$ scales as $\lambda^{-1}$). Hence, given any $\epsilon > 0$ we can choose a $\lambda$ sufficiently large that $| \nabla(\mu)|^2 vol(\Sigma) < \epsilon$. Hence
\begin{equation}\label{equ:st2}
s_t \ge s - \frac{\epsilon}{vol(\Sigma)}.
\end{equation}
Assume henceforth that such a $\lambda$ has been chosen.

Let $(A , \psi)$ denote a solution of the Seiberg-Witten equations on $\mathbb{R} \times Y$ with respect to the product metric $g_\mathbb{R} \times g_{Y , t_\infty}$ and zero perturbation, which is in the temporal gauge and is translation invariant. Equation (\ref{equ:st2}) gives
\[
-s_t \le s + \frac{\epsilon}{vol(\Sigma)} = \frac{8\pi(1-g) + \epsilon}{vol(\Sigma)}.
\]
Then arguing as in \cite[Section 5]{km}, the Seiberg-Witten equations imply an estimate of the form 
\[
|F_A| \le \frac{4\pi(g-1)+\epsilon/2}{vol(\Sigma)}
\]
and therefore
\[
| \langle [\Sigma] , c \rangle | = \frac{1}{2\pi} \left| \int_\Sigma F_A \right| \le \frac{1}{2\pi} \int_\Sigma |F_A| dvol_g \le (2g-2) + \frac{\epsilon}{4\pi}.
\]
Since $\epsilon$ was arbitrary, this proves Theorem \ref{thm:fadjunction} in the case that $[\Sigma]^2 = 0$ and $g \ge 1$.

Next assume that $[\Sigma]^2 = 0$ and $g=0$. Our argument in this case is modelled on \cite[Lemma 5.1]{fs0}. Proceeding as before we can assume that $f$ preserves $\Sigma$ and that the action of $f$ in a tubular neighbourhood of $\Sigma$ has the form
\[
N_\Sigma \to N_\Sigma \quad w \mapsto \varphi( \pi(w)) w
\]
for some $\varphi : \Sigma \to S^1$. But $g=0$ implies that $H^1(\Sigma ; \mathbb{Z}) = 0$ and hence $\varphi$ is homotopic to the identity. Hence after changing $f$ by a further isotopy, we can assume that $f$ acts as the identity in some tubular neighbourhood $U$ of $\Sigma$. For any integer $n > 0$, let $\Sigma_1 , \dots , \Sigma_n$ denote $n$ parallel copies of $\Sigma$ contained in $U$ and mutually disjoint. This is possible since $[\Sigma]^2 = 0$ means the normal bundle of $\Sigma$ is trivial. Let $X' = X \# \overline{\mathbb{CP}^2}$ denote the connected sum of $X$ with $\overline{\mathbb{CP}^2}$, where the connected sum is performed within $U$. Let $U'$ denote the corresponding open subset of $X'$ obtained from connect summing $\overline{\mathbb{CP}^2}$ to $U$. Then $f$ extends to a diffeomorphism $f' : X' \to X'$ which acts as the identity on $U'$. Let $S \in H^2(X' ; \mathbb{Z})$ represent the exceptional divisor. The blowup formula for families Seiberg-Witten invariants \cite[Theorem 2.2]{liu} implies that $c' = c + S \in H^2(X' ; \mathbb{Z})$ is an $f'$-basic class (note that the blowup formula in \cite{liu} was proved under an orientability assumption which in our case amounts to $sgn_+(f)=1$, however the proof works works equally well for the mod $2$ Seiberg-Witten invariants without any orientability assumption). The class $[\Sigma_n] = n[\Sigma] + S$ can be represented by an embedded sphere $\Sigma_n$ in $U' \subset X'$. Moreover $[\Sigma_n]^2 = -1$, so we can blow down $\Sigma_n$ to get a $4$-manifold $Y_n$. Since the blow down operation takes place in $U'$, we see that $f'$ induces a diffeomorphism $f_n : Y_n \to Y_n$. Writing $c' = c_n + [\Sigma_n]$ with $c_n \in H^2( Y_n ; \mathbb{Z})$, we see by another application of the families blowup formula that $c_n$ is an $f_n$-basic class. Applying the families blowup formula yet again, it follows that $c_n - [\Sigma_n]$ is an $f'$-basic class. But
\[
c_n - [\Sigma_n] = c_n + [\Sigma_n] - 2[\Sigma_n] = c - S - 2 n[\Sigma].
\]
If $[\Sigma]$ is non-torsion, then we have an infinite collection of $f'$-basic classes $\{ c-S-2n[\Sigma] \}$ on $X'$. However, for a given diffeomorphism $f' : X' \to X'$, only finitely many classes can be $f'$-basic. This follows by the same reasoning as in the unparametrised case: we can obtain a bound on solutions of the Seiberg-Witten equations independent of spin$^c$-structure. Therefore $[\Sigma]$ is a torsion class.

We now assume that $[\Sigma]^2 = n > 0$. We will reduce to the case $[\Sigma]^2 = 0$ by a sequence of blowups. This is a well-known technique in the unparametrised case (eg, \cite{km}). By possibly reversing orientation on $\Sigma$, we can assume that $\langle [\Sigma] , c \rangle \ge 0$. As before we can assume that $f$ preserves $\Sigma$ and that the action of $f$ in a tubular neighbourhood of $\Sigma$ takes the form
\[
N_\Sigma \to N_\Sigma \quad w \mapsto \varphi( \pi(w)) w
\]
for some $\varphi : \Sigma \to SO(2)$. Recall also that we are free to change $\varphi$ by an arbitrary smooth homotopy (and change $f$ by a corresponding isotopy). Therefore, we may assume that there exists a non-empty open subset $U \subset \Sigma$ on which $\varphi$ is the identity. It follows that $f$ acts as the identity on $\pi^{-1}(U)$, where $\pi : N_\Sigma \to \Sigma$ is the projection map. In particular, we can find $n$ points $x_1, \dots , x_n \in \Sigma$ such that $f$ acts as the identity in a neighbourhood of each point.

Let $X_n$ be the result of blowing up $X$ at the $n$ points $x_1, \dots , x_n$ of $\Sigma$. Since $f$ acts as the identity in a neighbourhood of these points, we have that $f$ extends naturally to a diffeomorphism $f_n : X_n \to X_n$. Denote by $\mathfrak{s}_i$ a spin$^c$-structure on the $i$-th copy of $\overline{\mathbb{CP}^2}$ in $X_n = X \#^n \overline{\mathbb{CP}^2}$ such that $S_i = c_1(\mathfrak{s}_i)$ represents the exceptional divisor and let $\widetilde{\Sigma}$ denote the proper transform of $\Sigma$ (or more differential geometrically, connect sum $\Sigma$ to $2$-spheres representing $-S_1, \dots , -S_n$ at the points $x_1, \dots , x_n$). Then $\widetilde{\Sigma}$ has the same genus as $\Sigma$,  $[\widetilde{\Sigma}]^2 = 0$ and $f_n$ acts as the identity on a neighbourhood of the $\overline{\mathbb{CP}^2}$ summands. We have that $\widetilde{c} = c + S_1 +  \cdots + S_n$ is $f_n$-basic from the blowup formula for families Seiberg-Witten invariants \cite[Theorem 2.2]{liu}. Therefore, we are in the self-intersection zero case, so if $g \ge 1$, then as shown above we have 
\begin{equation*}
\begin{aligned}
2g-2 &\ge | \langle [\widetilde{\Sigma}] , \widetilde{c} \rangle| \\
& = | \langle [\Sigma] - S_1 - \cdots - S_n , c + S_1 + \cdots + S_n \rangle | \\
&= | \langle [\Sigma] , c \rangle + n | \\
& = |\langle [\Sigma] , c \rangle| + n \quad (\text{since } \langle [\Sigma] , c \rangle \ge 0) \\
&= |\langle [\Sigma] , c \rangle| + [\Sigma]^2,
\end{aligned}
\end{equation*}
which is the adjunction inequality. If $g = 0$, then as above we have shown that $[\widetilde{\Sigma}]$ is a torsion class. But this is impossible since $n > 0$ and each $S_i$ is non-torsion.
\end{proof}

%%%%%%%%%%%%%%%%%%%%%%%%%%%%%%%%%%%%%%%
\section{Non-isotopic embedded surfaces}\label{sec:noniso}

In this section we give an application of Theorem \ref{thm:fadjunction} to construct infinite families of embedded surfaces which are homotopic (in fact continuously isotopic) but not smoothly isotopic.

\begin{theorem}
Let $X$ be one of the $4$-manifolds 
\begin{itemize}
\item[(i)]{$\#^{n} (S^2 \times S^2) \#^n K3$ for $n \ge 2$, or}
\item[(ii)]{$\#^{2n} \mathbb{CP}^2 \#^{m} \overline{\mathbb{CP}^2}$ for $n \ge 2$, $m \ge 10n+1$.}
\end{itemize}
Then there exists a diffeomorphism $f : X \to X$ such that:
\begin{itemize}
\item[(1)]{$f$ is homotopic to the identity.}
\item[(2)]{Let $u \in H^2(X ; \mathbb{Z})$ be any class with $u^2 > 0$. If $\Sigma \to X$ is an embedded surface of genus $g$ representing $u$ and $f^n(\Sigma)$, $f^m(\Sigma)$ are smoothly isotopic for some $n \neq m$, then $2g-2 \ge u^2$.}
\item[(3)]{Let $u \in H^2(X ; \mathbb{Z})$ satisfy $u^2 > 0$. In case (ii) assume moreover that u is a multiple of a primitive ordinary class. Then there exists an embedded surface $\Sigma \to X$ representing $u$ such that the surfaces $\{ f^n(\Sigma) \}_{n \in \mathbb{Z} }$ are all continuously isotopic but mutually non-isotopic smoothly. If $u$ is primitive then $\Sigma$ can be taken to have genus zero. In all other cases we can take $\Sigma$ to have genus $g$ satisfying $2g-2 < u^2$.}
\end{itemize}

\end{theorem}
\begin{proof}
Let $X$ be as in (i) or (ii). Then from Corollary \ref{cor:ctsnotsmooth}, there exists a diffeomorphism $f : X \to X$ which is continuously isotopic for the identity (in particular, $f$ is homotopic to the identity proving (1)) and for which there exists a a spin$^c$-structure $\mathfrak{s}$ preserved by $f$ with $d(X , \mathfrak{s}) = -1$ and $SW^{\mathbb{Z}}(X,f,\mathfrak{s}) \neq 0$. Let $f^n$ denote the $n$-fold composition of $f$, where $n$ is any integer. It follows that $f^n$ preserves $\mathfrak{s}$ and that $SW^{\mathbb{Z}}(X , f^n , \mathfrak{s}) \neq 0$ for any $n \neq 0$. Therefore we can apply Theorem \ref{thm:fadjunction} to $(X,f^n)$ for any $n \neq 0$. Let $u \in H^2(X ; \mathbb{Z})$ be any class with $u^2 > 0$ and suppose $\Sigma \to X$ is an embedded surface of genus $g$ representing $u$. If $f^n(\Sigma)$ and $f^m(\Sigma)$ are smoothly isotopic for some $n \neq m$, then $f^{n-m}(\Sigma)$ is smoothly isotopic to $\Sigma$ and hence Theorem \ref{thm:fadjunction} implies that
\[
2g-2 \ge u^2 + | \langle \Sigma , c \rangle | \ge u^2,
\]
proving (2).

Let $u \in H^2(X ; \mathbb{Z})$ satisfy $u^2 > 0$. In case (ii) assume moreover that $u$ is a multiple of a primitive ordinary class (note that this is automatic in case (i)). We will show that there exists an embedded surface $\Sigma \to X$ of genus $g$ representing $u$ and satisfying $2g-2 < u^2$. In light of the above adjunction inequality, this implies that the surfaces $\{ f^n(\Sigma) \}_{n \in \mathbb{Z}}$ are mutually smoothly non-isotopic. On the other hand, $f$ is continuously isotopic to the identity, so all the surfaces $\{ f^n(\Sigma) \}_{n \in \mathbb{Z}}$ are continuously isotopic.

Let $(\Lambda , \langle \, , \, \rangle)$ be the lattice $\Lambda = H^2(X ; \mathbb{Z})$ equipped with the intersection pairing $\langle \, , \, \rangle$. Let $\Gamma = Aut(\Lambda)$ denote the group automorphisms of $\Lambda$ preserving the intersection form. Any orientation preserving diffeomorphism of $X$ acts on $H^2(X ; \mathbb{Z})$ preserving the intersection form. In all cases we are considering, $X$ can be written in the form $X = (S^2 \times S^2) \# M$ for some smooth simply-connected compact oriented $4$-manifold $M$ with indefinite intersection form. By a theorem of Wall \cite{wall}, this implies that every element of $\Gamma$ can be realised by an orientation preserving diffeomorphism of $X$. If follows that there exists an embedded surface $\Sigma \to X$ of genus $g$ representing $u$ and satisfying $2g-2 < u^2$ if and only if the corresponding statement holds for $\gamma(u)$ for any $\gamma \in \Gamma$. By another theorem of Wall \cite[Theorem 4 and Theorem 6]{wall0}, if $b^+(X), b^-(X) \ge 2$, then $\Gamma$ acts transitively on the set of primitive elements of $\Lambda$ of given norm and type (recall that the type of an element $x \in \Lambda$ refers to whether $x$ is characteristic or ordinary). It is easy to see that the condition $b^+(X), b^-(X) \ge 2$ is satisfied for the $4$-manifolds we are considering.

Since $u^2 > 0$, it follows that $u \neq 0$ and thus we can write $u$ in the form $u = d u_0$, where $d > 0$ is an integer and $u_0$ is primitive. We then have that the $\Gamma$-orbit of $u$ in $\Lambda$ depends only on the values of $d>0$,  $u_0^2 > 0$ and the type of $u_0$.

Now we consider separately cases (i) and (ii). First consider case (i). Note that in this case any primitive class is ordinary, so we need only consider the values of $d$ and $u^2_0$. We again write $X$ as $X = S^2 \times S^2 \# M$ and then $\Lambda = H \oplus \Lambda'$, where $H = H^2(S^2 \times S^2 ; \mathbb{Z})$ and $\Lambda' = H^2(M ; \mathbb{Z})$ are the intersection forms for $S^2 \times S^2$ and $M$. Recall that $H$ has a basis $x,y$ satisfying
\[
\langle x , x \rangle = \langle y , y\rangle = 0, \quad \langle x , y \rangle = 1.
\]
Since $\Lambda$ is even we must have $u_0^2 = 2k$ for some $k > 0$. Then $u'_0 = kx+y$ is primitive and $(u'_0)^2 = 2k = u_0^2$. So there exists some $\gamma \in \Gamma$ such that $u_0 = \gamma( u'_0)$. Setting $u' = d u'_0$ we likewise have $u = \gamma(u')$. From \cite{rub0} it is known that $u' = dkx + dy$ can be represented by an embedded surface $\Sigma \to S^2 \times S^2$ of genus $g$, where $g=0$ if $d=1$ and
\[
g = (dk-1)(d-1) = kd^2 - dk-d+1
\]
if $d>1$. Note that
\[
2g-2 = 2d^2k -2dk-2d < 2d^2k = u^2.
\]
Of course the embedding $\Sigma \to S^2 \times S^2$ representing $u' \in H$ gives rise to an embedding $\Sigma \to X = S^2 \times S^2 \# M$ of the same genus representing $(u' , 0) \in H \oplus \Lambda'$, by taking the connected sum at a point not on $\Sigma$.

Now consider case (ii) and assume $u_0$ is ordinary. Then we can write $X$ as $X = S^2 \times S^2 \# \mathbb{CP}^2 \# M$ for $M = \#^{2n-2} \mathbb{CP}^{2} \#^{m-1} \overline{\mathbb{CP}^2}$ and $\Lambda = K \oplus \Lambda'$, where $K = H^2( S^2 \times S^2 \# \mathbb{CP}^2 ; \mathbb{Z})$ and $\Lambda' = H^2(M ; \mathbb{Z})$ are the intersection forms for $S^2 \times S^2 \# \mathbb{CP}^2$ and $M$. Note that $K$ has a basis $x,y,z$ satisfying
\[
\langle x , x \rangle = \langle y , y\rangle = \langle x , z \rangle = \langle y , z \rangle = 0, \quad \langle x , y \rangle = \langle z , z \rangle = 1.
\]
Consider separately the cases where $u_0^2$ is even or odd. In the even case $u_0^2 = 2k$ and we again consider $u'_0 = kx+y$. It is easy to see that $u'_0$ is primitive, ordinary and $(u'_0)^2 = 2k$, so there exists $\gamma \in \Gamma$ for which $u_0 = \gamma( u'_0)$. Proceeding exactly as in case (i), we deduce that $u$ can be represented by an embedded surface $\Sigma \to X$ of genus $g$, where $2g-2 < u^2$. In the odd case $u_0^2 = 2k+1$, $k \ge 0$ and we consider $u'_0 = kx+y+z$. Then $u'_0$ is primitive, ordinary and $(u'_0)^2 = 2k+1$. So there exists $\gamma \in \Gamma$ such that $u_0 = \gamma(u'_0)$. Hence $u = \gamma(u')$, where $u' = du'_0$. Arguing as in case (i), the class $dkx+dy$ can be represented by an embedded surface $\Sigma \to S^2 \times S^2$ of genus $g$, where $g=0$ if $d=1$ or $k=0$ and $g = (dk-1)(d-1)$ otherwise. The class $dz$ can of course be represented by an embedded surface in $\mathbb{CP}^2$ of genus $(d-1)(d-2)/2$. Taking a connected sum, it follows that the class $u' \in \Lambda$ can be represented by an embedded surface $\Sigma \to X$ of genus $g$, where $g = (d-1)(d-2)/2$ if $k=0$ and $(dk-1)(d-1)+(d-1)(d-2)/2$ otherwise. In either case one easily finds that $2g-2 < (2k+1)d^2 = u^2$.

Lastly note that in the above constructions, if $u$ is primitive then we have found a representative for $u$ of genus $0$. This completes the proof of (3).

\end{proof}

%%%%%%%%%%%%%%%%%%%%%%%%%%%%%%%%%%%%%%%%%%%%%%%%%%%%%%%%%%%%%%%%%%%%%%%%%%%%%%%%
%%%%%%%%%%%%%%%%%%%%%%%%%%%%%%%%%%%%%%%%%%%%%%%%%%%%%%%%%%%%%%%%%%%%%%%%%%%%%%%%
%%%%%%%%%%%%%%%%%%%%%%%%%%%%%%%%%%%%%%%%%%%%%%%%%%%%%%%%%%%%%%%%%%%%%%%%%%%%%%%%
%%%%%%%%%%%%%%%%%%%%%%%%%%%%%%%%%%%%%%%%%%%%%%%%%%%%%%%%%%%%%%%%%%%%%%%%%%%%%%%%

\bibliographystyle{amsplain}

\end{document}